\theoremstyle{definition}
\newtheorem{Thm}{Theorem}[section]
\newtheorem{Def}[Thm]{Definition}
\newtheorem{Rem}[Thm]{Remark}
\newtheorem{Ex}[Thm]{Example}
\newtheorem{Lem}[Thm]{Lemma} 
\newtheorem{Cor}[Thm]{Corollary} 
\newtheorem{Prop}[Thm]{Proposition}
\newcommand{\Q}{\mathbb{Q}}
\newcommand{\Z}{\mathbb{Z}}
\newcommand{\F}{\mathbb{F}}
\newcommand{\Tel}{\mathrm{Tel}}
\newcommand{\rank}{\mathrm{rank}}
\newcommand{\Sq}{\mathrm{Sq}} 
\newcommand{\cptwo}{\ensuremath{\mathbb{C}P^{2}}}
\newcommand{\cohlgy}[1]{\ensuremath{H^{*}(#1)}} 
\def\red#1{{\textcolor{red}{#1}}}
\newcounter{bean}
\newcommand{\namedright}[3]{\ensuremath{#1\stackrel{#2}
 {\longrightarrow}#3}}
\newcommand{\nameddright}[5]{\ensuremath{#1\stackrel{#2}
 {\longrightarrow}#3\stackrel{#4}{\longrightarrow}#5}}
\newcommand{\namedddright}[7]{\ensuremath{#1\stackrel{#2}
 {\longrightarrow}#3\stackrel{#4}{\longrightarrow}#5
  \stackrel{#6}{\longrightarrow}#7}}
\newcommand{\qqed}{\hfill\Box}
\begin{document} 

\title{Suspension splittings and self-maps of flag manifolds}
\author{Shizuo Kaji} 
\thanks{This work was supported by KAKENHI, Grant-in-Aid for Scientific Research (C) 18K03304.}
\address{Institute of Mathematics for Industry, Kyushu University, Fukuoka, 819-0395, Japan.}
\email{skaji@imi.kyushu-u.ac.jp}
\author{Stephen Theriault}
\address{Mathematical Sciences, University of Southampton, 
     Southampton SO17 1BJ, United Kingdom}
\email{s.d.theriault@soton.ac.uk}

\subjclass[2010]{Primary 55P40, 55S37, Secondary 57T15}
\keywords{flag manifold, self-map, stable splitting} 
\date{\today}


\begin{abstract}
If $G$ is a compact connected Lie group and $T$ is a maximal torus, we 
give a wedge decomposition of $\Sigma G/T$ by identifying families of 
idempotents in cohomology. This is used to give new information on the 
self-maps of $G/T$. 
\end{abstract}

\maketitle

\section{Introduction} 
Let $G$ be a compact connected Lie group and let $T$ be a 
maximal torus. There has been considerable interest in trying to determine 
the homotopy classes of the self-maps of the quotient space $G/T$. 
One method commonly adopted in~\cite{DZ,GH,P,Z1} is to study the image of the map 
\[r\colon\namedright{[G/T,G/T]}{}{\mathrm{Hom}_{\mathrm{alg}}(\cohlgy{G/T},\cohlgy{G/T})}.\] 
We show that if $G$ is simply-connected 
 there is a bijection 
\begin{equation} 
  \label{selfmapsplit} 
  [G/T,G/T]\cong [G/T,G]\times\, \mbox{Im}(r), 
\end{equation} 
where $r$ sends a self-map to the ring homomorphism it induces in cohomology. 
This was earlier claimed to hold in more generality in~\cite{Z2}, but there 
seems to be gaps. With~(\ref{selfmapsplit}) in hand, we consider the other factor, 
$[G/T,G]$, and develop an approach to understanding it. 

Since $G$ is a group it has a classifying space $BG$. This implies that there is 
a group isomorphism $[G/T,G]\cong [\Sigma G/T,BG]$. The idea is to decompose 
$\Sigma G/T$ into a wedge of smaller spaces which simplify the calculations. 
The decompositions are obtained by identifying certain idempotents 
in cohomology. These are $p$-local decompositions, where $p$ is a prime. 
Two families of idempotents are considered, one coming from 
Adams operations on the classifying spaces of $T$ and $G$, the other 
coming from the action of the Weyl group on~$G/T$. 
These are also compatible in the sense that they can be merged to form 
a larger set of idempotents, giving a finer decomposition of the space. 

These suspension splittings also fit into a larger framework that considers stable 
decompositions of homogeneous spaces. The classic example of this is 
Miller's stable splitting of Stiefel manifolds~\cite{M}, which inspired a great 
many variants and refinements (e.g., \cite{K,NY,Ullman,Y}). In those cases, the stable feature is prominent 
in the sense that multiple suspensions are usually needed to realize the 
decomposition, whereas in our case the decomposition occurs after a single 
suspension. 

To demonstrate the methods we give explicit decompositions of 
$SU(3)/T$, $SU(4)/T$ , $Sp(2)/T$ and $G_{2}/T$, and go on to calculate 
$[G/T,G]$ in each case (modulo $2$-primary information in the $SU(4)$ 
and $G_{2}$ cases). 

\section{The cohomology of $G/T$}
Let $W=N(T)/T$ be the Weyl group of $G$, which is generated by the
simple reflections $s_1,\ldots,s_r$, where $r=\rank(T)$.
\begin{Def}
For an element of $w\in W$, the length $l(w)$ of $w$ is the least integer such 
that~$w$ can be written as a product of $l(w)$ simple reflections. So an element $w$ of 
length $n$ can be written as $w=s_{i_1} s_{i_2} \cdots s_{i_n}$ for some sequence 
of simple reflections. We often abbreviate this as $w=s_{i_1i_2\cdots i_n}$.
\end{Def} 

The following theorem proved by~\cite{Che} describes $H^{\ast}(G/T;\Z)$ as a free $\Z$-module. 

\begin{Thm}[Bruhat decomposition] 
\label{Bruhat} 
There is a cell decomposition
\[
 G/T = \bigcup_{w\in W} \sigma_w,
\]
where $\sigma_w$ are open cells with $\dim(\sigma_w)=2l(w)$. 
Moreover, the closure is $\overline{\sigma_w}=\bigcup_{v\le w} \sigma_v$,
where the order on $W$ is given by the strong Bruhat order,
that is, $v\le w$ iff a reduced word for $w$ contains one of $v$ as a sub-word.
Consequently, $H^*(G/T;\Z)$ is torsion free of rank $|W|$ and its basis is given 
by the Schubert classes:
\[
 H^*(G/T;\Z) \cong H^{even}(G/T;\Z) \simeq \Z \langle \sigma_w \rangle_{w\in W}.
\]
\end{Thm}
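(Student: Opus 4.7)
The plan is to reduce the theorem to the Bruhat decomposition of the complex reductive group $G_\C$ obtained by complexifying $G$. Choose a Borel subgroup $B \subset G_\C$ containing the complexification $T_\C$. The Iwasawa-style factorization $G_\C = G \cdot B$, together with $G \cap B = T$, provides a diffeomorphism $G/T \cong G_\C/B$, so it suffices to construct the asserted cell structure on the complex flag variety $G_\C/B$.

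With that reduction in place, I would invoke the algebraic Bruhat decomposition $G_\C = \bigsqcup_{w \in W} B\dot{w}B$, where $\dot{w}$ denotes a lift of $w \in W = N(T)/T$ to the normalizer of $T_\C$ in $G_\C$. Passing to the quotient gives $G_\C/B = \bigsqcup_{w \in W} \sigma_w$ with $\sigma_w = B\dot{w}B/B$. To identify each piece as a cell, let $U$ be the unipotent radical of $B$ and let $U_w \subset U$ be the subgroup generated by the root subgroups $U_\alpha$ for positive roots $\alpha$ with $w^{-1}\alpha$ negative. A standard root-theoretic computation shows that the map $U_w \to \sigma_w$, $u \mapsto u\dot{w}B$, is a biregular isomorphism onto its image, and that $U_w \cong \C^{l(w)}$ as a variety. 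Consequently $\sigma_w$ is an open cell of real dimension $2l(w)$.

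The closure relations are proved by induction on $l(w)$ using the minimal parabolic subgroups $P_i = B \cup Bs_iB$ associated to the simple reflections. The key geometric fact is that when $l(s_iv) > l(v)$, one has $\overline{Bs_ivB/B} = P_i \cdot \overline{BvB/B}$. Iterating this step along a reduced expression $w = s_{i_1}\cdots s_{i_n}$ builds up $\overline{\sigma_w}$ and, through the combinatorics of subword inheritance, shows that $v \le w$ in the strong Bruhat order precisely when $\sigma_v \subset \overline{\sigma_w}$. This passage from Schubert geometry to the reduced-expression subword characterization is the main obstacle and where the greatest care is required, since it is the only place where the Bruhat order itself enters in an essential way.

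Finally, since every cell has even real dimension, the cellular chain complex computing $H_*(G/T;\Z)$ has all boundary maps zero for degree-parity reasons. Hence $H_*(G/T;\Z)$ is a free $\Z$-module of rank $|W|$ with basis $\{\sigma_w\}_{w \in W}$ concentrated in even degrees, and by universal coefficients the same holds for $H^*(G/T;\Z)$ via the dual Schubert classes, which yields the desired conclusion.
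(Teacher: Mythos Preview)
Your outline is correct and follows the standard route to the Bruhat decomposition: pass to the complexification, invoke $G_\C = \bigsqcup_{w} B\dot w B$, identify each Schubert cell with $U_w \cong \C^{l(w)}$, obtain the closure relations via minimal parabolics and the subword characterization of Bruhat order, and then read off the cohomology from the even-dimensional CW structure. Each step is valid as sketched; the only place demanding real work, as you note, is the closure-order statement, but the inductive argument through $P_i \cdot \overline{BvB/B}$ is the standard one and goes through.

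By way of comparison: the paper does not supply its own proof of this theorem. It states the result as known, attributes it to Chevalley~\cite{Che}, and closes with a bare $\Box$. So there is nothing to compare against beyond the citation itself; what you have written is essentially a sketch of Chevalley's argument in its modern form, and it is more than the paper provides.
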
 
\vspace{-0.7cm}~$\qqed$\bigskip 

Borel~\cite{Borel1953} gives another description of $H^{\ast}(G/T)$
as the quotient of a polynomial ring.
\begin{Thm}[Coinvariant description]\label{thm:coinvariant}
Let $R$ be a ring in which the torsion primes \cite{Borel1961} of $G$ are inverted.
Then
\[
H^\ast(G/T;R) = H^\ast(BT;R)/I 
\] 
where $I$ is the ideal generated by the Weyl group invariants of positive degree.
\end{Thm}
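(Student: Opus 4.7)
The plan is to follow Borel's classical strategy: apply spectral-sequence machinery to the fibration $G/T \to BT \xrightarrow{Bi} BG$ arising from the inclusion $i\colon T\hookrightarrow G$, and then invoke the invariant theory of finite reflection groups.

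First I would record the two structural inputs that hold precisely once the torsion primes of $G$ are inverted in $R$. By Borel's original calculation, $H^*(BG;R)$ is a polynomial algebra on generators in even degrees, and the restriction map $Bi^*\colon H^*(BG;R)\to H^*(BT;R)$ is injective with image exactly the ring $H^*(BT;R)^W$ of Weyl-group invariants. Under the same hypothesis, $|W|$ is invertible in $R$, so the Chevalley--Shephard--Todd theorem applies to the reflection action of $W$ on $H^*(BT;R) \cong R[t_1,\ldots,t_r]$ and yields that $H^*(BT;R)$ is a free module of rank $|W|$ over $H^*(BT;R)^W = Bi^*\bigl(H^*(BG;R)\bigr)$.

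Next I would feed this into the Eilenberg--Moore spectral sequence for the homotopy pullback $G/T\simeq\mathrm{hofib}(Bi)$. Since $G$ is connected, $BG$ is simply connected, so the spectral sequence converges strongly to $H^*(G/T;R)$ from
\[
E_2^{*,*}=\mathrm{Tor}^{*,*}_{H^*(BG;R)}\bigl(H^*(BT;R),R\bigr).
\]
The freeness established above makes $H^*(BT;R)$ a projective $H^*(BG;R)$-module, so $\mathrm{Tor}^{s}$ vanishes for $s>0$ and the spectral sequence collapses at $E_2^{0,*}=H^*(BT;R)\otimes_{H^*(BG;R)}R$. By definition the latter is $H^*(BT;R)/I$, where $I$ is the ideal generated by $Bi^*(H^{>0}(BG;R))$, namely the positive-degree Weyl invariants. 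Comparing ranks with the $|W|$ Schubert classes of Theorem~\ref{Bruhat} gives an internal consistency check.

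The principal obstacle is invariant-theoretic rather than topological: one needs both Borel's identification $H^*(BG;R)\cong H^*(BT;R)^W$ and Chevalley--Shephard--Todd freeness, and these are precisely what fails in general when torsion primes are not inverted. Once they are in hand, the spectral-sequence collapse is automatic. An alternative route replaces the Eilenberg--Moore argument with the Serre spectral sequence of $G/T\to BT\to BG$, whose collapse at $E_2$ is forced by the same invariant-theoretic input; the remainder of the argument then proceeds by a rank count against Theorem~\ref{Bruhat}.
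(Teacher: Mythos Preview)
The paper does not prove this theorem; it is simply stated as Borel's result with a citation to~\cite{Borel1953}, so there is no proof in the paper to compare against. Your outline is essentially the standard modern argument one would supply.

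There is, however, a genuine error in your sketch. You assert that ``under the same hypothesis, $|W|$ is invertible in $R$,'' and then invoke Chevalley--Shephard--Todd to deduce that $H^*(BT;R)$ is free over the invariants. The implication is false: for $G=SU(n)$ there are no torsion primes, so $R=\mathbb{Z}$ is allowed, yet $|W|=n!$; for $E_7$ the torsion primes are $2$ and $3$, but $|W|$ is divisible by $5$ and $7$. The conclusion you need---that $H^*(BT;R)$ is free of rank $|W|$ over $H^*(BG;R)\cong H^*(BT;R)^W$ once the torsion primes are inverted---is correct, but it is precisely Demazure's theorem (\emph{Invariants sym\'etriques entiers des groupes de Weyl et torsion}, Invent.\ Math.\ \textbf{21} (1973), 287--301), which characterises the torsion primes of $G$ as exactly the primes at which this freeness fails. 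With Demazure substituted for Chevalley--Shephard--Todd, your Eilenberg--Moore collapse goes through and the rest of the argument is fine.
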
 

\begin{Ex}
\[
H^*(SU(n)/T^{n-1};\Z) = \Z[x_1,x_2,\ldots, x_n]/(e_1,e_2,\ldots,e_n),
\]
where $e_i$ is the $i$-th elementary symmetric function on $x_1,\ldots,x_n$.
A choice of a polynomial representative of $\sigma_w$ in this presentation
is given by the classical {\em Schubert polynomial} \cite{LS}.
\end{Ex}

\section{A splitting of $[G/T,G/T]$} 
The group homomorphism 
\(\namedright{T}{}{G}\) 
classifies, giving a homotopy fibration sequence 
\[\namedddright{T}{}{G}{q}{G/T}{j}{BT}\longrightarrow BG\] 
which defines the map $j$. In particular, for a simply-connected space $X$ 
we obtain an exact sequence 
\[\nameddright{[X,G]}{q_*}{[X,G/T]}{j_{\ast}}{[X.BT]},\]
where $q_*$ is injective. 
Consider the map 
\[r\colon\namedright{[X,G/T]}{}{\mathrm{Hom}_{\mathrm{alg}}(H^*(G/T;\Z),H^*(X;\Z))}\]  
defined by sending a map 
\(\namedright{X}{}{G/T}\) 
to the algebra homomorphism it induces in cohomology. Similarly, there is a map 
\[\namedright{[X,BT]}{}{\mathrm{Hom}_{\mathrm{alg}}(H^*(BT;\Z),H^*(X;\Z))}\] 
which is an isomorphism since both sides are canonically isomorphic to 
$\bigoplus_{1\le i\le r} H^2(X;\Z)$. We obtain a commutative diagram
\begin{equation} 
\label{Jdgrm} 
\xymatrix{
[X,G] \ar@{^{(}-_>}[r] & [X,G/T] \ar[r]^{j_*} \ar[d]^r & [X,BT] \ar[d]^\simeq \\
& \mathrm{Hom}_{\mathrm{alg}}(H^*(G/T;\Z),H^*(X;\Z)) \ar[r]^J & \mathrm{Hom}_{\mathrm{alg}}(H^*(BT;\Z),H^*(X;\Z)) 
}
\end{equation}  
where $J(f^{\ast})=f^{\ast}\circ j^{\ast}$. 
We have the holonomy action $[X,G/T]\times [X,G]\to [X,G/T]$.
By the Puppe sequence~\cite[Lemma 1.4.7]{maypont},
 the action is free and moreover,
for $f_1, f_2 \in [X,G/T]$
 we have $j_*(f_1)=j_*(f_2)$ if and only if $f_1$ and $f_2$ are in the same orbit of the action of $[X,G]$. 
Therefore, we have the following non-canonical identification
\begin{equation} 
\label{noncanonsplitting} 
[X,G/T] \cong [X,G]\times \mathrm{Im}(j_*).
\end{equation} 

\begin{Prop} 
   \label{selfsplit} 
   If $X$ is simply-connected and $H^*(X;\Z)$ is torsion-free then the map $J$ is 
   a monomorphism. 
\end{Prop}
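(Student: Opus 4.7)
\emph{Plan.} The strategy is to exploit the two torsion-freeness hypotheses to reduce injectivity of $J$ to a rational statement, where Borel's theorem makes $j^{\ast}$ surjective and injectivity becomes immediate.

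First, since $H^{\ast}(G/T;\Z)$ is torsion-free by Theorem~\ref{Bruhat} and $H^{\ast}(X;\Z)$ is torsion-free by hypothesis, both embed into their rationalizations. Consequently, any algebra homomorphism $f^{\ast}\colon H^{\ast}(G/T;\Z)\to H^{\ast}(X;\Z)$ extends uniquely to a rational algebra homomorphism $f^{\ast}_{\Q}\colon H^{\ast}(G/T;\Q)\to H^{\ast}(X;\Q)$, and $f^{\ast}$ is recovered from $f^{\ast}_{\Q}$ by restriction. In particular, $f^{\ast}$ is determined by $f^{\ast}_{\Q}$.

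Second, I would apply Theorem~\ref{thm:coinvariant} with $R=\Q$. Since every torsion prime of $G$ is invertible in $\Q$, we obtain $H^{\ast}(G/T;\Q)=H^{\ast}(BT;\Q)/I_{\Q}$, with this identification realized by $j^{\ast}$. In particular $j^{\ast}_{\Q}\colon H^{\ast}(BT;\Q)\to H^{\ast}(G/T;\Q)$ is the quotient map, and is therefore surjective.

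Combining the two ingredients, if $J(f_{1}^{\ast})=J(f_{2}^{\ast})$, meaning $f_{1}^{\ast}\circ j^{\ast}=f_{2}^{\ast}\circ j^{\ast}$, then rationalizing yields $(f_{1}^{\ast})_{\Q}\circ j^{\ast}_{\Q}=(f_{2}^{\ast})_{\Q}\circ j^{\ast}_{\Q}$. Surjectivity of $j^{\ast}_{\Q}$ forces $(f_{1}^{\ast})_{\Q}=(f_{2}^{\ast})_{\Q}$, and the first step then gives $f_{1}^{\ast}=f_{2}^{\ast}$, so $J$ is injective. There is no serious obstacle here: the proof rests entirely on Borel's theorem and on the elementary fact that a torsion-free module embeds in its rationalization. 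The simply-connectedness of $X$ is inherited from the surrounding setup and does not enter the argument directly.
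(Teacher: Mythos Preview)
Your proof is correct and follows essentially the same route as the paper's: both arguments pass to rational coefficients using torsion-freeness (so that integral algebra maps are determined by their rationalizations), and then use that $j^{\ast}_{\Q}$ is surjective---you phrase this via Theorem~\ref{thm:coinvariant}, the paper phrases it as ``$H^{\ast}(G/T;\Q)$ is generated by degree two elements and $H^{2}(G/T;\Q)\cong H^{2}(BT;\Q)$''---to conclude injectivity of the rational $J$ and hence of $J$ itself.
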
 

\begin{proof} 
Since $H^*(G/T;\Z),H^*(X;\Z)$, and $H^*(BT;\Z)$ are torsion free,
the vertical maps (rationalizations) in the following commutative diagram are injective
\[
\xymatrix{
\mathrm{Hom}_{\mathrm{alg}}(H^*(G/T;\Z),H^*(X;\Z)) \ar[r]^J \ar[d] &\mathrm{Hom}_{\mathrm{alg}}(H^*(BT;\Z),H^*(X;\Z)) \ar[d]\\
\mathrm{Hom}_{\mathrm{alg}}(H^*(G/T;\Q),H^*(X;\Q)) \ar[r]^{J_{(0)}} &\mathrm{Hom}_{\mathrm{alg}}(H^*(BT;\Q),H^*(X;\Q)), 
}
\]
where $J_{(0)}$ is the rationalization of $J$. Since $H^*(G/T;\Q)$ is generated by the degree 
two elements and $H^2(G/T;\Q)\cong H^2(BT;\Q)$, we see that $J_{(0)}$ is injective. The 
commutativity of the diagram then implies that $J$ is also injective.
\end{proof} 

\begin{Rem} 
Zhao~\cite[Lemma 1]{Z2} claims that $J$ is injective without the torsion-free hypothesis.
However, this seems unlikely. Observe that $H^*(G/T;\Z)$ is torsion-free and $H^*(G/T;\Q)$ is 
generated by degree two elements. But $H^*(G/T;\Z)$ is NOT generated by degree two elements 
in general; for example in the case of $H^*(G_2/T;\Z)$ described in \S \ref{G2section},
even when an induced map~$f^*$ 
for $f\in [X,G/T]$ is trivial on $H^2(G_2/T;\Z)\simeq \Z[x_1,x_2,x_3]/(e_1)$, $f^*(\gamma)$ can be a non-trivial 
torsion element in $H^*(X;\Z)$. 
\end{Rem} 

\begin{Cor} 
   \label{selfsplitcor} 
   If $X$ is simply-connected and $H^*(X)$ is torsion-free then there is an isomorphism 
   \[[X,G/T]\cong [X,G]\times\mbox{Im}(r).\] 
\end{Cor}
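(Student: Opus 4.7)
The plan is to combine the noncanonical decomposition~(\ref{noncanonsplitting}), which already gives $[X,G/T]\cong [X,G]\times\Im(j_{\ast})$, with the injectivity of $J$ supplied by Proposition~\ref{selfsplit}. What remains is to identify $\Im(j_{\ast})$ with $\Im(r)$ as sets and substitute. The hypotheses---simple connectivity of $X$ and torsion-freeness of $H^{\ast}(X;\Z)$---are exactly what is required to invoke both preceding results, so both the splitting and the injectivity of $J$ are available at the outset.

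Reading off diagram~(\ref{Jdgrm}), we have the identity $J\circ r=\phi\circ j_{\ast}$, where $\phi$ denotes the right-hand vertical isomorphism $[X,BT]\xrightarrow{\cong}\mathrm{Hom}_{\mathrm{alg}}(H^{\ast}(BT;\Z),H^{\ast}(X;\Z))$. Using this identity I would construct an explicit bijection $\Im(j_{\ast})\to\Im(r)$ by sending $j_{\ast}(f)\mapsto r(f)$. Both well-definedness and injectivity follow immediately from injectivity of $J$: if $j_{\ast}(f_{1})=j_{\ast}(f_{2})$, then $J(r(f_{1}))=\phi(j_{\ast}(f_{1}))=\phi(j_{\ast}(f_{2}))=J(r(f_{2}))$, hence $r(f_{1})=r(f_{2})$. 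Surjectivity holds by construction. Substituting $\Im(r)$ for $\Im(j_{\ast})$ in~(\ref{noncanonsplitting}) then yields the corollary.

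I do not anticipate any serious obstacle; the statement is essentially a clean repackaging of Proposition~\ref{selfsplit} together with the holonomy-action splitting. The only point requiring any care is that the bijection between the two images must be induced by the same $f\in[X,G/T]$ on both sides, which is precisely what the commutativity of diagram~(\ref{Jdgrm}) records.
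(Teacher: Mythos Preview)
Your proposal is correct and follows essentially the same approach as the paper: use Proposition~\ref{selfsplit} to get injectivity of $J$, use the commutative square in~(\ref{Jdgrm}) to identify $\Im(j_{\ast})$ with $\Im(r)$, and then substitute into~(\ref{noncanonsplitting}). The paper states this in three sentences while you spell out the bijection $\Im(j_{\ast})\to\Im(r)$ explicitly; one small quibble is that the injectivity of that bijection actually uses the bijectivity of $\phi$ rather than of $J$, but the argument is the same in spirit.
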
 

\begin{proof} 
By Proposition~\ref{selfsplit}, the map $J$ in~(\ref{Jdgrm}) is a monomorphism. 
The square in~(\ref{Jdgrm}) therefore implies that $\mbox{Im}(j_{\ast})\cong\mbox{Im}(r)$. 
Now substitute this isomorphism into~(\ref{noncanonsplitting}). 
\end{proof} 
   
By Theorem~\ref{Bruhat}, $H^*(G/T)$ is torsion-free. So Proposition~\ref{selfsplit} 
immediately implies the following. 

\begin{Cor} 
   \label{selfGTsplit} 
   Let $G$ be a compact simply-connected Lie group. Then there is an isomorphism 
   \[[G/T,G/T]\cong [G/T,G]\times\mbox{Im}(r).\]  
\end{Cor}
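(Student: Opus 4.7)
The plan is to apply Corollary~\ref{selfsplitcor} directly with $X = G/T$. To do so I need to verify the two hypotheses of that corollary for $G/T$: that it is simply-connected and that its integral cohomology is torsion-free.

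The torsion-freeness of $H^*(G/T;\Z)$ is immediate from Theorem~\ref{Bruhat}: the Bruhat decomposition exhibits $G/T$ as a CW complex with cells only in even dimensions, indexed by the Weyl group, so the cellular chain complex has trivial differentials and $H^*(G/T;\Z)$ is a free $\Z$-module of rank $|W|$. The simple connectivity follows from the homotopy fibration $T\to G\to G/T$: in the associated long exact sequence of homotopy groups, the relevant portion reads $\pi_1(G)\to\pi_1(G/T)\to\pi_0(T)$, and both flanking groups vanish because $G$ is simply-connected and $T$ is connected.

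With both hypotheses verified, Corollary~\ref{selfsplitcor} applied to $X=G/T$ yields exactly the desired isomorphism $[G/T,G/T]\cong [G/T,G]\times\mathrm{Im}(r)$.

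There is no real obstacle here, since the corollary is a direct consequence of the previously established Corollary~\ref{selfsplitcor}; the only content is the observation that $G/T$ itself meets the standing hypotheses on $X$, which are supplied by the Bruhat decomposition together with the simple connectivity of $G$.
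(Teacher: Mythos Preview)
Your proof is correct and follows essentially the same approach as the paper. The paper's justification is the single sentence ``By Theorem~\ref{Bruhat}, $H^*(G/T)$ is torsion-free,'' after which the corollary is stated; you do the same but add an explicit verification of simple connectivity of $G/T$ via the long exact sequence of the fibration $T\to G\to G/T$, which the paper leaves implicit.
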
 
\vspace{-0.7cm}~$\qqed$\bigskip 
   
Note that $G/T\simeq \hat{G}/\hat{T}$,
where $\hat{G}$ is the universal cover of $G$ and 
$\hat{T}$ is the maximal torus of $\hat{G}$.
Hence, we can always take $G$ to be simply-connected.

\section{Idempotents for $H^{\ast}(G/T)$} 

Now we start to focus on $[G/T,G]$, which by Corollary~\ref{selfGTsplit} is a factor 
of $[G/T,G/T]$. In this section we construct two families of compatible idempotents for 
$H^{\ast}(G/T)$ and use them to produce wedge decompositions of $\Sigma G/T$. 
This begins with a general lemma~(c.f. \cite[\S 2]{Priddy}).

\begin{Def}
Let $R$ be a ring. 
A finite collection $p_1,p_2,\ldots,p_n$ of self-maps of a connected space $X$ 
is called a set of \emph{mutually orthogonal idempotents} of $H^{\ast}(X;R)$ if: 
\begin{itemize} 
   \item[(i)] $p_{i}^{\ast}\circ p_{i}^{\ast}=p_{i}^{\ast}$ for $1\leq i\leq n$;\smallskip  
   \item[(ii)] $p_{i}^{\ast}\circ p_{j}^{\ast}=0$ for all $1\leq i,j\leq n$ with $i\neq j$; and\smallskip  
   \item[(iii)] $p_{1}^{\ast}+\cdots +p_{n}^{\ast}=1$. 
\end{itemize} 
\end{Def} 

Given a self-map 
\(f\colon\namedright{X}{}{X}\), 
let $\Tel(f)$ be the telescope of $f$ and let 
\(t\colon\namedright{X}{}{\Tel(f)}\) 
be the map to the telescope. Since $t\circ f\simeq t$, the map 
\(\namedright{H^{\ast}(\Tel(f);R)}{t^{\ast}}{H^{\ast}(X;R)}\) 
induces the inclusion of \mbox{Im}\,$(f^{\ast})$. 

\begin{Lem} 
\label{idemdecomp} 
Let $X$ be a simply-connected finite co-$H$-space.
Let $p_{1},\ldots,p_{n}$ be a set of mutually orthogonal idempotents on 
$H^{\ast}(X;\Z/p\Z)$. Then there is a $p$-local homotopy equivalence 
\[
X \simeq \bigvee_{i=1}^{n}\Tel(p_i).
\]
\end{Lem}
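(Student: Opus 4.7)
The plan is to exhibit an explicit candidate map $\phi\colon X\to\bigvee_{i=1}^{n}\Tel(p_i)$ assembled from the maps $t_i$ via the co-$H$-space structure of $X$, verify that $\phi$ induces an isomorphism on mod $p$ cohomology by using the idempotent relations, and then upgrade to a $p$-local equivalence via the Whitehead theorem.

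First, I would iterate the co-$H$ comultiplication on $X$ to form a map $\mu^{(n)}\colon X\to X\vee\cdots\vee X$ (the $n$-fold wedge), and then define $\phi=(t_1\vee\cdots\vee t_n)\circ\mu^{(n)}$. Since each projection $X\vee\cdots\vee X\to X$ collapsing all but one factor is a left inverse of $\mu^{(n)}$ up to homotopy, a direct calculation shows that $(\mu^{(n)})^{\ast}$ on reduced mod $p$ cohomology is the summation map $\bigoplus_{i=1}^{n}\widetilde{H}^{\ast}(X;\Z/p\Z)\to\widetilde{H}^{\ast}(X;\Z/p\Z)$. Combined with the identification $t_{i}^{\ast}\colon\widetilde{H}^{\ast}(\Tel(p_i);\Z/p\Z)\xrightarrow{\cong}\Im(p_{i}^{\ast})\subseteq\widetilde{H}^{\ast}(X;\Z/p\Z)$ recorded just before the lemma, this shows that $\phi^{\ast}$ is the map $\bigoplus_{i}\Im(p_{i}^{\ast})\to\widetilde{H}^{\ast}(X;\Z/p\Z)$ given by summation. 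The three idempotent axioms then force this to be an isomorphism: axiom (iii) gives $a=\sum_{i}p_{i}^{\ast}(a)$ for every $a$, so the subspaces $\Im(p_{i}^{\ast})$ span, while any relation $\sum_{i}a_{i}=0$ with $a_i\in\Im(p_i^*)$ yields $a_{j}=p_{j}^{\ast}(a_{j})=0$ after applying $p_{j}^{\ast}$ and using (i) and (ii).

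To upgrade from a mod $p$ cohomology equivalence to a $p$-local homotopy equivalence, I would note that $X$, being a simply-connected finite CW complex, has $\Z_{(p)}$-homology finitely generated in each degree; each $\Tel(p_{i})$ inherits a CW structure with finitely many cells per dimension, so the target of $\phi$ is also simply-connected with $\Z_{(p)}$-homology finitely generated in each degree. The mapping cone of $\phi$ then has $\Z_{(p)}$-homology finitely generated in each degree and vanishing mod $p$, so Nakayama's lemma forces it to vanish over $\Z_{(p)}$, after which the $p$-local Whitehead theorem yields the equivalence.

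I expect the main obstacle to be the bookkeeping required to make the cohomological identification of $\Tel(p_i)$ fully precise --- in particular, confirming that each $t_i^{\ast}$ really realizes the inclusion of $\Im(p_{i}^{\ast})$ and that the co-$H$-assembly correctly translates to summation on pullbacks. Once these foundational points are nailed down, both the direct-sum decomposition of $\widetilde{H}^{\ast}(X;\Z/p\Z)$ and the Nakayama--Whitehead upgrade are essentially formal.
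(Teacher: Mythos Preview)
Your proposal is correct and follows essentially the same approach as the paper: assemble the telescope maps via the co-$H$ structure, check that the resulting map induces an isomorphism on mod~$p$ cohomology using the idempotent relations, and then upgrade to a $p$-local equivalence. The paper is simply terser, citing \cite[Chapter~II, Theorem~1.14]{HMR} directly for the final step rather than spelling out the Nakayama/Whitehead argument.
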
 

\begin{proof} 
Since 
\(\namedright{X}{p_{i}}{\Tel(p_{i})}\) 
induces the inclusion of \mbox{Im}\,$(p_{i}^{\ast})$, 
the sum of the maps $p_{i}$ defines a map 
\(\psi\colon\namedright{X}{}{\bigvee_{i=1}^{n}\Tel(p_{i})}\) 
which induces an isomorphism in mod-$p$ cohomology. Since $X$ 
is simply-connected and of finite type, this implies that $\psi$ is a $p$-local homotopy equivalence
by \cite[Chapter II, Theorem 1.14]{HMR}.
\end{proof}

We identify two families of self maps of $\Sigma G/T$ that can be used to produce 
idempotents on $H^\ast(\Sigma G/T;\Z/p\Z)$.
Note that the co-$H$ structure on $\Sigma G/T$ induces a group structure on
$[\Sigma G/T, \Sigma G/T]$.

\subsection{Unstable Adams operations}\label{sec:adams}
We follow the argument in~\cite{Y}. For $l\in \Z$ prime to $|W|$, there is a commutative diagram 
\[
\xymatrix{
G \ar[r] \ar[d]^{\Omega \psi^l} & G/T \ar[r] \ar[d]^{\psi^l} & BT \ar[r] \ar[d]^{\psi^l} & BG \ar[d]^{\psi^l}\\
G \ar[r] & G/T \ar[r] & BT \ar[r] & BG
}
\]
where $(\psi^l)^*: H^{2i}(X) \to H^{2i}(X)$ is multiplication by $l^i$.

For an odd prime $p$, choose $l\in \Z$ which is primitive in $\F_p^\times$ and define 
a self-map of $\Sigma G/T$ by
\[
\phi_i=\Sigma \psi^l-l^i \text{ and } 
\varphi'_k = \prod_{1\le i\le n,\ i\not\equiv k \bmod{p-1}} \phi_i,
\]
where $n=\max(p-1,\dim(G/T)/2)$ and $l^i: \Sigma G/T \to \Sigma G/T$ is $l^i$ times the identity map.
Note that $\varphi'_i$ is trivial on $H^{2j+1}(\Sigma G/T;\Z/p\Z)$ iff $j=i \bmod{p-1}$.
So by normalizing up to unit, we obtain a set of mutually orthogonal idempotents 
$\varphi_{1},\ldots,\varphi_{n}$ on $H^{\ast}(\Sigma G/T;\Z/p\Z)$, where $\varphi_{i}=u_{i}\varphi'_{i}$ 
for some unit $u_{i}\in\Z/p\Z$. Therefore, by Lemma~\ref{idemdecomp} there is a 
$p$-local homotopy equivalence  
\[
 \Sigma G/T \simeq \bigvee \Tel(\varphi_i),
\]
where 
$\tilde{H}^{2i+1}(\Tel(\varphi_k);\Z/p\Z) \cong 
\begin{cases} \tilde{H}^{2i}(G/T;\Z/p\Z) & \mbox{if $i=k \mod p-1$} \\
0 & \mbox{if $i\neq k \mod p-1$}. \end{cases}$


\subsection{The Weyl group action}\label{sec:weyl}
The flag manifold $G/T$ is equipped with a right Weyl group action:
\[
 gT \mapsto gwT
\]
for $w\in W=N(T)/T$. 
Thus, given any $w\in W$ we obtain a self-map 
\(w\colon\namedright{G/T}{}{G/T}\). 
In particular, each simple reflection $s_{i}$ induces a self-map 
\(s_{i}\colon\namedright{G/T}{}{G/T}\). 

By~\cite{BGG}, the $W$-action on Schubert classes is given by 
\[
 s_i \sigma_w = \begin{cases}
 \sigma_w & \mbox{if $l(ws_i) = l(w)+1$} \\
 -\sigma_w - \sum_{l(w s_i s_\beta)=l(w) } \dfrac{2(\beta,\alpha_i)}{(\beta,\beta)} \sigma_{w s_\beta} & \mbox{if $l(ws_i) = l(w) -1$}.
 \end{cases}
\]
In the coinvariant description (Theorem \ref{thm:coinvariant}),
the $W$-action is simply induced by the ordinary one on $H^*(BT;R)$. 

Using the co-$H$-structure on $\Sigma G/T$ to add maps,
to each element $v$ in the group ring $\Z[W]$ there associated a self-map  
\(v\colon\namedright{\Sigma G/T}{}{\Sigma G/T}\). 
Thus if we find a set of mutually orthogonal idempotents in the group ring 
we can find an induced set of mutually orthogonal idempotents in $H^{\ast}(\Sigma G/T;\mathbb{Z})$. 
The same argument works if we replace $\mathbb{Z}$-coefficients with $\Z/p\Z$ 
or $\mathbb{Q}$-coefficients and consider the corresponding localization of the space. 

It is well-known that $H^{\ast}(G/T;\Q)$ is the regular representation of $W$ and
decomposes into irreducible representations.
However, constructing the corresponding set of mutually orthogonal idempotents even 
in $\Q[W]$ is non-trivial \cite{bergeron}. For our purpose, we aim to construct 
mutually orthogonal idempotents in $\Z/p\Z[W]$, and in the examples in 
Section~\ref{sec:examples}, the identification of the mutually orthogonal idempotents is ad hoc. 

Nevertheless, given a set of mutually orthogonal idempotents $\{c_{1},\ldots,c_{n}\}$ on 
$H^{\ast}(\Sigma G/T;\Z/p\Z)$, by Lemma~\ref{idemdecomp} we obtain 
a $p$-local homotopy equivalence 
\[\Sigma G/T\simeq\bigvee_{i=1}^{n}\Tel(c_{i}).
\]  

\subsection{Putting the two decompositions together} 
In general, if $p_{1},\ldots,p_{n}$ and $q_{1},\ldots,q_{m}$ are two sets of 
mutually orthogonal idempotents on $H^{\ast}(X;\Z/p\Z)$ that commute, 
where $X$ is a simply-connected finite co-$H$-space, 
then the collection $\{p_{i}\circ q_{j}\mid 1\leq i\leq n, 1\leq j\leq m\}$ is another 
set of mutually orthogonal idempotents on $H^{\ast}(X;\Z/p\Z)$. In our case, 
the idempotents $\varphi_{i}^{\ast}$ from the unstable Adams operations and the 
idempotents~$c_{j}^{\ast}$ from the action of the Weyl group commute 
since $\varphi_i^{\ast}$ is just a projection on to the subspaces consisting of elements 
of specific degrees while $c_j^{\ast}$ preserves the degrees. Thus the maps 
$\{\varphi_{i}\circ c_{j}\mid 1\leq i\leq n, 1\leq j\leq m\}$ form a set of mutually 
orthogonal idempotents on $H^{\ast}(\Sigma G/T;\Z/p\Z)$, and produce a finer 
decomposition of $\Sigma G/T$. We think of the decomposition based on the unstable
Adams operation as splitting $H^{\ast}(\Sigma G/T;\Z/p\Z)$ ``horizontally'' while
the one based on Weyl group action splits ``vertically.'' 

Note that as any space rationally splits into a wedge of spheres after suspension, 
we are primarily interested in $p$-local decompositions of $\Sigma G/T$ for a small prime $p$.

\section{Examples} 
\label{sec:examples} 

In identifying homotopy 
types of telescopes, we freely use the fact that the element $\eta\in\pi_{n+1}(S^{n})$ 
is detected by the Steenrod operation $Sq^{2}$ and at odd primes the element 
$\alpha_{1}\in\pi_{n+2p-3}(S^{n})$ is detected by the Steenrod operation $\mathcal{P}^{1}$. 
This is equivalent to saying that if $\tilde{H}^{\ast}(\Tel(c_{i});\Z/2\Z)\cong\Z/2\Z\{x,Sq^{2}(x)\}$ 
for $\vert x\vert=d\geq 3$ then there is a $2$-local homotopy equivalence 
$\Tel(c_{i})\simeq\Sigma^{d-2}\cptwo$, and if $p$ is odd and 
$\tilde{H}^{\ast}(\Tel(c_{i});\Z/p\Z)\cong\Z/p\Z\{x,\mathcal{P}^{1}(x)\}$ for $\vert x\vert=d\geq 3$ 
then there is a $p$-local homotopy equivalence 
$\Tel(c_{i})\simeq A(d,d+2p-2)$ where $A(d,d+2p-2)$ is the homotopy cofiber of 
\(\namedright{S^{d+2p-3}}{\alpha_{1}}{S^{d}}\). 

In what follows, for a fixed prime $p$, we generically use the notation 
$c_{1},\ldots,c_{n}$ for a set of mutually orthogonal idempotents in the group ring $\Z/p\Z[W]$
 and let $V_1,\ldots,V_n$ be their images in $H^\ast(G/T;\Z/p\Z)$.  
By abuse of notation, we use the same symbol $c_i$ to denote the corresponding 
idempotents on $H^{\ast}(\Sigma G/T;\Z/p\Z)$. 
The action of the Steenrod operations $Sq^{2}$ or $\mathcal{P}^{1}$ 
are determined by, for example, \cite{DZ2007}. 
Computation on cohomology is carried out with the aid of a computer code described in \cite{maple}.
Idempotents in the group ring are obtained by solving quadratic equations in prime fields. 

\subsection{Type $A_n$ case}
For the type $A_n$-case, non-modular irreducible representations $V_\lambda$ are 
obtained by considering the Young symmetrizers for all the standard tableaux of shape $\lambda$. 
However, they are not always mutually orthogonal \cite{Stembridge2010}. 
We will look at some low rank cases in an ad hoc way. 

\begin{Ex}\label{SU(3)-weyl}
Since $SU(2)/T=S^2$, the simplest non-trivial case is when $G=SU(3)$.
By Theorems \ref{Bruhat} and \ref{thm:coinvariant},
 $H^*(SU(3)/T^2;\Z) \simeq \Z[x_1,x_2,x_3]/(e_1,e_2,e_3) \simeq \Z\langle 1,\sigma_1,\sigma_2,\sigma_{12},\sigma_{21},\sigma_{121}\rangle$.
It is well-known that the type-$A_r$ Weyl group is the symmetric group $S_{r+1}$.
In particular, $W= \langle s_1, s_2 \rangle$,  where the simple reflection $s_i$ swaps $x_i$ and $x_{i+1}$.
The Schubert cell decomposition looks like
\[
 \xymatrix{
 & \sigma_{121} \ar@{-}[rd] \ar@{-}[ld] & 
     & & & x_1^2 x_2 \ar@{-}[rd] \ar@{-}[ld] & \\
 \sigma_{12} \ar@{-}[d] \ar@{-}[drr]& & \sigma_{21} \ar@{-}[d] \ar@{-}[dll] 
     & = & x_1x_2 \ar@{-}[d] \ar@{-}[drr]& & x_1^2 \ar@{-}[d] \ar@{-}[dll]\\
 \sigma_{1} \ar@{-}[rd] & & \sigma_{2} \ar@{-}[ld] 
   & & x_1 \ar@{-}[rd] & & x_1+x_2 \ar@{-}[ld] \\
 & 1 &  & & & 1 & 
}
\]
where $\sigma_1^3=\sigma_2^3=0$.
The horizontal level indicates the degree of the cells,
and the lines indicate possible non-trivial attaching maps.

We first find mutually orthogonal idempotents in the group ring (see \S \ref{sec:weyl}).
For $p=2$, there is a set of mutually orthogonal idempotents in $\Z/2\Z[W]$ 
\begin{align*}
c_1 & = 1+s_{12}+s_{21} \\  
c_2 & = 1+s_2+s_{21}+s_{121} \\
c_3 & =  1+s_2+s_{12}+s_{121} 
\end{align*}
satisfying 
\begin{align*} 
V_1 & =\langle 1, \sigma_{121} \rangle \\ 
V_2 & =\langle \sigma_{1}+\sigma_{2},\sigma_{12}+\sigma_{21} \rangle \\ 
V_3 & =\langle \sigma_{1},\sigma_{21} \rangle 
\end{align*} 
where $\Sq^2(\sigma_1)=\sigma_{21}, \Sq^2(\sigma_2)=\sigma_{12}$. Therefore 
there is a $2$-local homotopy equivalence 
\[\Sigma SU(3)/T^{2}\simeq_2 S^{7}\vee\Sigma\cptwo\vee\Sigma\cptwo.\] 

For $p=3$, there is a set of mutually orthogonal idempotents in $\Z/3\Z[W]$
\begin{align*}
c_1 &= 2+s_{121} \\
c_2 &= 1-c_1  
\end{align*} 
satisfying 
\begin{align*} 
V_1 &= \langle 1, \sigma_1+2\sigma_2, \sigma_{12}+\sigma_{21} \rangle \\
V_2 &= \langle \sigma_1+\sigma_2, \sigma_{12}+2\sigma_{21},\sigma_{121} \rangle 
\end{align*}
where there is no non-trivial $\mathcal{P}^1$. Therefore each telescope is $3$-locally 
homotopy equivalent to a wedge of spheres and we obtain a $3$-local homotopy equivalence 
\[\Sigma SU(3)/T^{2}\simeq_3 S^{3}\vee S^{3}\vee S^{5}\vee S^{5}\vee S^{7}.\] 

If $p>3$, for degree reasons the unstable Adams operations (\S \ref{sec:adams}) imply that there is a $p$-local 
homotopy equivalence 
\[
\Sigma SU(3)/T^{2} \simeq_p S^3 \vee S^3 \vee S^5 \vee S^5 \vee S^7.
\]
\end{Ex}

The next example is $SU(4)/T^{3}$. It will be helpful to have some splitting 
information that comes from geometry as well as Adams operations and 
the action of the Weyl group. 

\begin{Lem}\label{lem:top-split}
The stable normal bundle of any flag manifold $G/T$ is trivial.
In particular, the top cell of $G/T$ stably splits off.
\end{Lem}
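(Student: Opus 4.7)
The plan is to first show that the tangent bundle $T(G/T)$ is stably trivial---equivalently, that the stable normal bundle is trivial---and then deduce the splitting of the top cell by the Pontryagin--Thom construction. For the first step, I would use the standard identification of the tangent bundle as an associated bundle,
\[ T(G/T)\;\cong\; G\times_T (\mathfrak{g}/\mathfrak{t}), \]
coming from the principal $T$-bundle $G\to G/T$ and the isotropy action of $T$ on $\mathfrak{g}/\mathfrak{t}$. An $\mathrm{Ad}$-invariant inner product on $\mathfrak{g}$ yields a $T$-equivariant splitting $\mathfrak{g}\cong \mathfrak{t}\oplus \mathfrak{g}/\mathfrak{t}$ and hence a bundle decomposition
\[ G\times_T\mathfrak{g}\;\cong\;(G\times_T\mathfrak{t})\,\oplus\, T(G/T). \]

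Both bundles on the left-hand side are in fact trivial, for complementary reasons. The bundle $G\times_T\mathfrak{g}$ is trivial because $\mathfrak{g}$ extends from a $T$-representation to a $G$-representation (namely the adjoint representation of $G$): the map $(g,v)\mapsto (gT,\mathrm{Ad}(g)v)$ then gives an explicit trivialization $G\times_T\mathfrak{g}\cong (G/T)\times\mathfrak{g}$. The bundle $G\times_T\mathfrak{t}$ is trivial because $T$ is abelian and hence acts trivially on its own Lie algebra $\mathfrak{t}$, so $G\times_T\mathfrak{t}\cong(G/T)\times\mathfrak{t}$. Combining these shows that $T(G/T)\oplus\mathbb{R}^r$ is trivial, where $r=\dim T$, so $T(G/T)$ and its stable inverse (the stable normal bundle) are both stably trivial.

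For the top cell splitting, I would choose an embedding $G/T\hookrightarrow \mathbb{R}^{n+k}$ with $n=\dim(G/T)$ and $k$ large enough that the normal bundle is actually trivial (possible once it is stably trivial and $k>n$, since $BO(k)\to BO$ is sufficiently connected). The Pontryagin--Thom construction applied to a tubular neighborhood $G/T\times D^k$ then produces a map $S^{n+k}\to \Sigma^k(G/T)_+$, and composing with the pinch map $\Sigma^k(G/T)_+\to \Sigma^k S^n$ onto the top cell yields a self-map of $S^{n+k}$ of degree one. This is a stable section of the top-cell collapse, so the top cell of $G/T$ stably splits off. There is no serious obstacle once the tangent bundle is realized as an associated bundle; the key observation is that $\mathfrak{g}$ is already a $G$-representation, which is what lets $G\times_T\mathfrak{g}$ trivialize explicitly.
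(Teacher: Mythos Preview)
Your argument is correct and rests on the same linear algebra as the paper's: the $T$-equivariant splitting $\mathfrak{g}\cong\mathfrak{t}\oplus\mathfrak{g}/\mathfrak{t}$, with $G\times_{T}\mathfrak{t}$ trivial because $T$ acts trivially on $\mathfrak{t}$, and $G\times_{T}\mathfrak{g}$ trivial because $\mathfrak{g}$ is a $G$-module. The difference is packaging. The paper realizes this splitting geometrically: it embeds $G/T$ in $\mathfrak{g}$ as the adjoint orbit of a regular element, $gT\mapsto\mathrm{Ad}_{g}(X)$. For this explicit embedding the normal bundle is \emph{exactly} $G\times_{T}\mathfrak{t}$, hence already trivial without stabilizing, and the Thom space is $\Sigma^{\dim T}(G/T)_{+}$ on the nose. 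Your version instead shows $T(G/T)$ is stably trivial, then invokes a generic high-codimension embedding to make the normal bundle actually trivial before running Pontryagin--Thom. Both are fine; the paper's route is a little sharper in that it gives an embedding of minimal codimension $r=\dim T$ with trivial normal bundle, so the top-cell splitting already lives at the $r$-fold suspension rather than only ``stably.''
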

\begin{proof}
Let $\mathfrak{g}$ and $\mathfrak{t}$ be the Lie algebras of $G$ and $T$ respectively.
Take a regular element $X\in \mathfrak{g}$ and consider the adjoint embedding
$G/T\to \mathfrak{g}$ induced by $g \mapsto Ad_g(X)$.
The normal bundle $\nu$ is $G\times_T \mathfrak{t}$, which is trivial since the adjoint action of $T$ on $\mathfrak{t}$ is trivial.
By the Pontrjagin-Thom construction combined with the pinching map, we obtain the splitting
\[
S^{\dim(G)}\to (G/T)^\nu \simeq \Sigma^{\dim(T)} (G/T)_+ \to S^{\dim(G)},
\]
where $(G/T)^\nu$ is the Thom complex of the normal bundle $\nu$.
\end{proof}

\begin{Ex}\label{SU(4)-weyl}
The rank of $H^*(SU(4)/T^3;\Z)\simeq \Z[x_1,x_2,x_3,x_4]/(e_1,e_2,e_3,e_4)$ is
$\{1,3,5,6,5,3,1\}$.
The Weyl group $W$ is the symmetric group $\langle s_1, s_2, s_3 \rangle$ with $|W|=24$. 

For $p=2$, there do not appear to be so many idempotents. For example, $c_1=s_{23}+s_{32}$ 
and $c_2=1-c_1$ form a mutually orthogonal set of idempotents in $\Z/2\Z[W]$
but they do not help much in terms of producing splittings with identifiable wedge summands. 

For $p=3$, there is a set of mutually orthogonal idempotents in $\Z/3\Z[W]$
\begin{align*}
c_1 &= (1+s_1+s_2+s_{12}+s_{21}+s_{121})(1-s_{12321}) \\
c_2 &= (1+s_{12321})(1-s_1-s_2+s_{12}+s_{21}-s_{121}) \\
c_3 &= 2+2s_{1}+2s_{2}+2s_{3}+2s_{2 1}+2s_{1 2}+2s_{3 2}+2s_{2 3}+2s_{1 2 1}+s_{3 2 1}+2s_{2 3 2}+s_{1 2 3}+s_{1 3 2 1}+s_{2 3 2 1}+s_{1 2 1 3} \\ 
       & \hspace{1cm} +s_{1 2 3 2}+s_{2 1 3 2 1}+s_{1 2 3 2 1}+s_{1 2 1 3 2}+s_{1 2 1 3 2 1} \\
c_4 &= 2+s_2+2s_{13}+s_{213}+s_{132}+2s_{2132}+s_{12321}+2s_{121321}\\
c_5 &= 2+2s_2+2s_{13}+2s_{213}+2s_{132}+2s_{2132}+2_{12321}+2s_{121321}\\
c_6 &= 2+s_{1}+s_{2}+2s_{3}+2s_{2 1}+s_{1 3}+2s_{1 2}+s_{3 2}+s_{2 3}+s_{1 2 1}+s_{3 2 1}+2s_{2 1 3}+2s_{1 3 2}+2s_{2 3 2}+2s_{1 2 3}+2s_{1 3 2 1} \\ 
       & \hspace{1cm} +2s_{2 3 2 1}+s_{1 2 1 3}+s_{2 1 3 2}+s_{1 2 3 2}+s_{2 1 3 2 1}+s_{1 2 3 2 1}+2s_{1 2 1 3 2}+2s_{1 2 1 3 2 1}\\
c_7 &= 2+s_{2 3}+s_{3 2}+s_{3}+s_{1 3}+s_{2 1 3}+s_{1 3 2}+s_{2 3 2}+s_{2 1 3 2}+2s_{1 2 1 3}+2s_{1 2 1 3 2}+2s_{1 2 3}+2s_{1 2 3 2}+s_{1 3 2 1}+s_{3 2 1} \\ 
       & \hspace{1cm} +s_{2 1 3 2 1}+2s_{1 2 1 3 2 1}+s_{2 3 2 1}+2s_{1 2 3 2 1}+2s_{1 2 1}+2s_{2 1} 
  +2s_{1 2}+2s_{2}+2s_{1}\\
c_8 &= 1-\sum_{i=1}^{7} c_i
\end{align*}
satisfying 
\begin{align*}
V_1 &= \langle \sigma_3, \sigma_{23}, \sigma_{123}\rangle \\
V_2 &= \langle \sigma_{123}+\sigma_{121}+\sigma_{321}+2\sigma_{213}+2\sigma_{132}+\sigma_{232},
\sigma_{1213}+\sigma_{1232}+2\sigma_{1321}+2\sigma_{2321},
\sigma_{21321}+\sigma_{12321}+\sigma_{12132} \rangle \\
V_3 &= \langle \sigma_{321}+2\sigma_{213}+\sigma_{123},
\sigma_{2321}+2\sigma_{1213},\sigma_{12321}
\rangle \\
V_4 &= \langle 2\sigma_{12}+\sigma_{13}+\sigma_{21}+2\sigma_{32}+\sigma_{23},
2\sigma_{2321}+\sigma_{2132}+2\sigma_{1213},\sigma_{121321} \rangle \\
V_5 &= \langle 1,\sigma_{12}+\sigma_{13}+\sigma_{23},
\sigma_{2321}+\sigma_{1213} \rangle \\
V_6 &= \langle 
\sigma_{321}+2\sigma_{213}+2\sigma_{132}+2\sigma_{232}+\sigma_{123},
\sigma_{2321}+\sigma_{1232}+2\sigma_{2132}+2\sigma_{1213},
\sigma_{12321}+2\sigma_{12132} \rangle\\
V_7 &= \langle \sigma_{1}+2\sigma_{3}, \sigma_{21}+\sigma_{23}, \sigma_{321}+2\sigma_{123}\rangle \\
V_8 &= \langle \sigma_{1}+\sigma_{2}+\sigma_{3},\sigma_{21}+2\sigma_{12}+\sigma_{32}+2\sigma_{23},
\sigma_{321}+2\sigma_{132}+\sigma_{213} \rangle.
\end{align*}
The non-trivial actions of $\mathcal{P}^1$ are
\begin{align*}
V_1, V_7, V_8: & H^2\to H^6 \\
V_4, V_5: & H^4 \to H^8 \\
V_2,V_3,V_6: &  H^6\to H^{10}.
\end{align*} 
For degree reasons in cohomology, the unstable Adams operations split $\Tel(c_{i})$ 
for $i\in\{1,7,8\}$ into wedge summands: one inheriting the degree $3$ and $7$ generators 
in cohomology and the other inheriting the degree~$5$ generator. Similarly, the unstable 
Adams operations split $\Tel(c_{i})$ for $i\in\{2,3,6\}$ into wedge summands: one 
inheriting the degree $7$ and $11$ generators in cohomology and the other inheriting the 
degree~$9$ generator. All together we obtain $3$-local homotopy equivalences 
\begin{align*} 
     \Tel(c_{1})\simeq\Tel(c_{7})\simeq\Tel(c_{8}) &\simeq A(3,7)\vee S^{5} \\ 
     \Tel(c_{2})\simeq\Tel(c_{3})\simeq\Tel(c_{6}) & \simeq A(7,11)\vee S^{9}. 
\end{align*} 
Note that $\Tel(c_{5})\simeq A(5,9)$ and $\Tel(c_{4})$ is a three-cell complex 
whose $9$-skeleton is $A(5,9)$. We claim that $\Tel(c_{4})\simeq A(5,9)\vee S^{13}$. 
To see this we show that the attaching map 
\(g\colon\namedright{S^{12}}{}{A(5,9)}\) 
for the top cell is null homotopic. Since there is no Steenrod operation connecting 
the $9$ and $13$-cells in $\Tel(c_{4})$, the composition 
\(\nameddright{S^{12}}{g}{A(5,9)}{q}{S^{9}}\) 
must be null homotopic, where $q$ is the pinch map to the top cell. 
Therefore $g$ lifts to the homotopy fibre of $q$, which by a Serre spectral 
sequence argument, is homotopy equivalent to $S^{5}$ in dimensions~$\leq 12$. 
Thus~$g$ is homotopic to a composite 
\(\namedright{S^{12}}{t\cdot\alpha_{2}}{S^{5}}\hookrightarrow A(5.9)\), 
where $\alpha_{2}$ generates $\pi_{12}(S^{5})\cong\mathbb{Z}/3\mathbb{Z}$ 
and $t\in\mathbb{Z}/3\mathbb{Z}$. Each map in this composite is stable, 
so $g$ is stable. But by Lemma \ref{lem:top-split}, the top cell of $\Tel(c_{4})$ splits off 
stably. Thus $g$ is stably trivial, and so $g$ itself must be trivial. 
Hence, we have $\Tel(c_{4})\simeq A(5,9)\vee S^{13}$. 
To sum up, there is a $3$-local homotopy equivalence 
\[\Sigma SU(4)/T^{3}\simeq_3 3A(3,7)\vee 3S^{5}\vee 3A(7,11)\vee 3S^{9}\vee 
      2 A(5,9)\vee S^{13}.\] 

If $p=5$ then for degree reasons the unstable Adams operations decompose 
$\Sigma SU(4)/T^{3}$ as a wedge $X_{1}\vee\cdots\vee X_{4}$ where 
$H^{\ast}(X;\Z/5\Z)$ consists of the degree $3$ and $11$ elements in 
$H^{\ast}(SU(4)/T^{3};\Z/5\Z)$, $H^{\ast}(X_{2};\Z/5\Z)$ consists of the 
degree $5$ and $13$ elements, $H^{\ast}(X_{3};\Z/5\Z)$ consists of the degree $7$ 
elements, and $H^{\ast}(X_{4};\Z/5\Z)$ consists of the degree $9$ elements. 
On the other hand, since $\sigma_w^5=0$ for all $l(w)=1$, all $\mathcal{P}^1$ are trivial. 
Thus there is a $5$-local homotopy equivalence 
\[\Sigma SU(4)/T^{3}\simeq_5 3S^{3}\vee 5S^{5}\vee 6S^{7}\vee 5S^{9}\vee 3S^{11}\vee S^{13}.\] 

If $p>5$ then for degree reasons the unstable Adams operations decompose 
$SU(4)/T^{3}$ as a wedge of spheres (the same wedge as in the $p=5$ case.)  
\end{Ex}

\subsection{Type $B_2=C_2$}
The Weyl group $W$ is the hyper-octahedral group $\langle s_1,s_2 \rangle$ with $|W|=8$.
We have $H^*(Sp(2)/T)=\dfrac{\Z[x_1,x_2]}{(x_1^2+x_2^2,x_1^2x_2^2)}$ and 
its Betti numbers are $\{ 1,2,2,2,1 \}$.

For $p=2$, there is no non-trivial idempotents.

For $p>2$ (the non-modular case), there is a set of mutually orthogonal idempotents in $\Z/p\Z[W]$ 
\begin{align*}
c_1 &= \frac18(1+s_{1}-s_{2}-s_{12}-s_{21}-s_{121}+s_{212}+s_{1212})\\
c_2 &= \frac18(1-s_{1}+s_{2}-s_{12}-s_{21}+s_{121}-s_{212}+s_{1212})\\
c_3 &= \frac18(1-s_{1}-s_{2}+s_{12}+s_{21}-s_{121}-s_{212}+s_{1212})\\
c_4 &= \frac18(1-s_{1}+s_{212}-s_{1212})\\
c_5 &= \frac18(1+s_{1}-s_{212}-s_{1212}) \\
c_6 &= \frac18\sum_{w\in W} w  
\end{align*} 
satisfying 
\begin{align*} 
V_1 &= \langle \sigma_{12} \rangle \\
V_2 &= \langle \sigma_{21} \rangle \\
V_3 &= \langle \sigma_{1212} \rangle \\
V_4 &= \langle \sigma_{1}-\sigma_2, \sigma_{121}+\sigma_{212} \rangle \\
V_5 &= \langle \sigma_2, \sigma_{212} \rangle \\
V_6 &= \langle 1 \rangle
\end{align*}
where $\sigma_1^3=\sigma_{121}$ and $\sigma_2^3=2\sigma_{212}$. 
In particular, we obtain $\Tel(c_{1})\simeq\Tel(c_{2})\simeq S^{5}$,  
$\Tel(c_{3})\simeq S^{9}$ and $\Tel(c_{6})\simeq\ast$. 

If $p=3$ then the equations $\sigma_1^3=\sigma_{121}$ and $\sigma_2^3=2\sigma_{212}$ 
imply that $\mathcal{P}^1$ is non-trivial on $\sigma_1$ and~$\sigma_2$. Therefore 
$\Tel(c_{4})\simeq\Tel(c_{5})\simeq A(3,7)$. Hence there is a $3$-local 
homotopy equivalence 
\[\Sigma Sp(2)/T\simeq_3 2A(3,7)\vee 2S^{5}\vee S^{9}.\] 

If $p\geq 5$ then all Steenrod operations $\mathcal{P}^{1}$ are trivial so 
$\Tel(c_{4})\simeq\Tel(c_{5})\simeq S^{3}\vee S^{7}$. Hence there is a 
$p$-local homotopy equivalence 
\[\Sigma Sp(2)/T\simeq_p 2S^{3}\vee 2S^{5}\vee 2S^{7}\vee S^{9}.\]

\subsection{Type $G_2$} 
\label{G2section} 
The Weyl group $W$ is the dihedral group $D_6=\langle s_1,s_2 \rangle$ with $|W|=12$. 
The cohomology of $G_2/T$ is computed by \cite{Bott-Samelson,Toda-Watanabe} as
\[
 H^\ast(G_2/T)=\frac{\Z[x_1,x_2,x_3,\gamma]}{(e_1,e_2,e_3-2\gamma,\gamma^2)}.
\] 
For $p=2$, there is a set of mutually orthogonal idempotents in $\Z/2\Z[W]$
\begin{align*}
c_{1} &= 1+s_{1212}+s_{2121}\\ 
c_{2} &= 1+s_{212}+s_{1212}+s_{12121}\\
c_{3} &= 1+s_{212}+s_{2121}+s_{12121} 
\end{align*} 
satisfying 
\begin{align*} 
V_1 &= \langle 1, \sigma_{121},\sigma_{212},\sigma_{121212} \rangle, \\
V_2 &= \langle \sigma_{1}+\sigma_{2}, \sigma_{12}+\sigma_{21},\sigma_{1212}+\sigma_{2121},\sigma_{12121}+\sigma_{21212} \rangle, \\
V_3 &= \langle \sigma_{1}, \sigma_{21},\sigma_{2121},\sigma_{12121}\rangle,
\end{align*}
where $\Sq^2(\sigma_1)=\sigma_{21},\Sq^2(\sigma_2)=\sigma_{12},\Sq^2(\sigma_{1212})=\sigma_{21212},
\Sq^2(\sigma_{2121})=\sigma_{12121}$. The multiple generators of different degrees in the 
modules $V_{i}$ imply that the telescopes of the maps $c_{i}$ are not readily identifiable. 
So we say nothing more than there is a $2$-local homotopy equivalence 
\[\Sigma G_{2}/T\simeq_2\Tel(c_{1})\vee\Tel(c_{2})\vee\Tel(c_{3}).\] 

For $p=3$, there is a set of mutually orthogonal idempotents in $\Z/3\Z[W]$
\begin{align*}
c_1 &= 1+s_1+s_{21212}+ s_{121212} \\
c_2 &= 1+s_1-s_{21212}- s_{121212} \\
c_3 &= 1-s_1-s_{21212}+ s_{121212} \\
c_4 &= 1-s_1+s_{21212}- s_{121212} 
\end{align*} 
satisfying 
\begin{align*} 
V_1 &= \langle 1, \sigma_{12}, \sigma_{1212} \rangle \\
V_2 &= \langle \sigma_2, \sigma_{212}, \sigma_{21212} \rangle \\
V_3 &= \langle \sigma_{21}+\sigma_{12}, \sigma_{2121}-\sigma_{1212}, \sigma_{121212} \rangle \\
V_4 &= \langle \sigma_{1}+\sigma_{2}, \sigma_{121}, \sigma_{12121}-\sigma_{21212} \rangle
\end{align*}
The non-trivial actions of $\mathcal{P}^1$ are
\begin{align*}
V_1, V_3: &  H^4\to H^{8} \\
V_2: &  H^6\to H^{10} \\
V_4: & H^2 \to H^6.
\end{align*} 
In particular, $\Tel(c_{1})\simeq A(5,9)$, but the other telescopes are not as 
readily identifiable. So we say nothing more right now other than there is a $3$-local 
homotopy equivalence 
\[\Sigma G_{2}/T\simeq_3 A(5,9)\vee\Tel(c_{2})\vee\Tel(c_{3})\vee\Tel(c_{4}).\]

For $p>3$ (the non-modular case), the regular representation decomposes into  
four $1$-dimensional and four $2$-dimensional irreducible representations.
The maps inducing the cohomology idempotents corresponding to the $1$-dimensional 
irreducible representations are given by 
\begin{align*}
c_1 &= \dfrac{1}{12} \sum_{w\in W} w
& c_2 = \dfrac{1}{12} \sum_{w\in W} (-1)^{l(w)} w \\
c_3 &= \dfrac{1}{12} \sum_{w\in W} (-1)^{\#_w 1} w
& c_4 = \dfrac{1}{12} \sum_{w\in W} (-1)^{\#_w 2} w 
\end{align*} 
where $\#_w i$ is the number of $s_i$'s in $w$, and these satisfy 
\begin{align*} 
V_1 &= \langle 1 \rangle
& V_2 = \langle \sigma_{121212} \rangle \\
V_3 &= \langle \sigma_{121} \rangle
& V_4 = \langle \sigma_{212} \rangle. \\
\end{align*} 
In particular, we obtain $\Tel(c_{1})\simeq\ast$, $\Tel(c_{2})\simeq S^{13}$  
and $\Tel(c_{3})\simeq\Tel(c_{4})\simeq S^{7}$. 

The maps inducing the cohomology idempotents corresponding to the $2$-dimensional 
irreducible representations are given by 
\begin{align*}
c_5 &= \dfrac{1}{12}(
2+s_1-2s_2-s_{12}-s_{21}+s_{121}+s_{212}-s_{1212}-s_{2121}-2s_{12121}+
s_{21212}+2s_{121212}) \\
c_6 &= \dfrac{1}{12}(
2-s_1+2s_2-s_{12}-s_{21}-s_{121}-s_{212}-s_{1212}-s_{2121}+2s_{12121}-
s_{21212}+2s_{121212}) \\
c_7 &= \dfrac{1}{6}(1-s_1-s_2+s_{21}-s_{1212}+s_{12121}+s_{21212}-s_{121212}) \\
c_8 &= \dfrac{1}{6}(1+s_1+s_2+s_{12}-s_{2121}-s_{12121}-s_{21212}-s_{121212})  
\end{align*} 
and these satisfy  
\begin{align*} 
V_5 &= \langle 2\sigma_{12}-\sigma_{21}, 2\sigma_{1212}+\sigma_{2121} \rangle \\
V_6 &= \langle \sigma_{21}, \sigma_{2121} \rangle \\
V_7 &= \langle 3\sigma_{1}-2\sigma_2, 3\sigma_{12121}+2\sigma_{21212} \rangle \\
V_8 &= \langle \sigma_{2}, \sigma_{21212} \rangle. \\
\end{align*} 
If $p=5$ then the Steenrod operation $\mathcal{P}^{1}$ is trivial on $V_{5}$ 
and $V_{6}$ for degree reasons. So 
$\Tel(c_{5})\simeq\Tel(c_{6})\simeq S^{5}\vee S^{9}$. 
On the other hand, $\mathcal{P}^1(\sigma_2)=3\sigma_{21212}$ 
and $\mathcal{P}^1(3\sigma_{1}-2\sigma_2)=2(3\sigma_{12121}+2\sigma_{21212})$ 
so $\Tel(c_{7})\simeq\Tel(c_{8})\simeq A(3,11)$. 
Therefore there is a $5$-local homotopy equivalence 
\[\Sigma G_{2}/T\simeq_5 2S^{5}\vee 2S^{7}\vee 2S^{9}\vee S^{13}\vee 2 A(3,11).\] 
If $p>5$ then the Steenrod operation $\mathcal{P}^{1}$ is trivial on each of $V_{5}$, 
$V_{6}$, $V_{7}$ and $V_{8}$. Therefore there is a $p$-local homotopy equivalence 
\[\Sigma G_{2}/T\simeq_p 2S^{3}\vee 2S^{5}\vee 2S^{7}\vee 2S^{9}\vee 2S^{11}\vee S^{13}.\] 

\begin{Rem}
It is interesting to note that factors are ``Poincar\'e dual'' to each other.
For example, $(V_6,V_7)$ in $SU(4)/T$ with $p=3$ are dual to each other 
in the sense that the generators in the complimentary degrees multiply to the top degree element
(e.g., $(\sigma_{2321}+\sigma_{1232}+2\sigma_{2132}+2\sigma_{1213})(\sigma_{21}+\sigma_{23})=\sigma_{121321} \mod 3$).
For $SU(4)/T$ with $p=3$, $(V_1,V_2), (V_3,V_8)$, and $(V_4,V_5)$ are all dual pairs as well.
For $Sp(2)/T$ with $p>2$, $(V_1,V_2), (V_3,V_6)$, and $(V_4,V_5)$ are dual pairs.
For $G_2/T$ with $p>3$, $(V_1,V_2), (V_3,V_4), (V_5,V_6)$, and $(V_7,V_8)$ are dual pairs.
It might be interesting to find a geometric explanation for this.
\end{Rem}

\section{Self-maps of flag manifolds} 
\label{sec:selfmap} 

The decomposition of $\Sigma G/T$ allows for a calculation of the 
factor $[G/T,G]$ of $[G/T,G/T]$. Since $G$ is a topological 
group, it has a classifying space $BG$, and $G\simeq\Omega BG$. Therefore 
there is an adjunction giving an isomorphism of groups 
$[G/T,G]\cong [G/T,\Omega BG]\cong [\Sigma G/T,BG]$. 
Suppose that there is a homotopy decomposition 
$\Sigma G/T\simeq\bigvee_{i=1}^{k} A_{i}$. As this is a decomposition of spaces 
rather than co-$H$-spaces, there is a set isomorphism 
$[\Sigma G/T,BG]\cong [\bigvee_{i=1}^{k} A_{i}, BG]\cong\prod_{i=1}^{k} [A_{i},BG]$. 
Combining these isomorphisms gives the following. 

\begin{Lem} 
   \label{hset} 
   If $\Sigma G/T\simeq\bigvee_{i=1}^{k} A_{i}$ then there is an isomorphism 
   of sets $[G/T,G]\cong\prod_{i=1}^{k} [A_{i},BG]$.~$\qqed$ 
\end{Lem}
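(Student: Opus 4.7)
The statement is essentially unpacked by the paragraph that immediately precedes it, so the plan is to make each of the three isomorphisms there explicit and then compose them.

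First, I would record that because $G$ is a topological group, there is a classifying space $BG$ together with a weak equivalence $G\simeq\Omega BG$. Mapping into this equivalence gives a natural bijection $[G/T,G]\cong[G/T,\Omega BG]$ as pointed homotopy sets.

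Next, I would apply the loop/suspension adjunction to obtain the bijection $[G/T,\Omega BG]\cong[\Sigma G/T,BG]$; here only the structure of $\Sigma G/T$ as a space (not as a co-$H$-space) is used, so the output is an isomorphism of sets. Substituting the hypothesized decomposition $\Sigma G/T\simeq\bigvee_{i=1}^{k}A_{i}$ yields $[\Sigma G/T,BG]\cong\bigl[\bigvee_{i=1}^{k}A_{i},BG\bigr]$, and the universal property of the wedge (coproduct in pointed spaces) identifies the right-hand side with $\prod_{i=1}^{k}[A_{i},BG]$. Composing all four bijections gives the stated isomorphism of sets.

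There is no real obstacle here; the only point worth flagging is that the decomposition of $\Sigma G/T$ is assumed only as spaces, not as co-$H$-spaces, so although each $[A_i,BG]$ inherits a group structure from the loop space target, the overall bijection $[G/T,G]\cong\prod_{i=1}^{k}[A_i,BG]$ need not be a group isomorphism. For that reason I would state the conclusion carefully as an isomorphism of (pointed) sets, exactly as in the lemma.
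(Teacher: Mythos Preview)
Your proposal is correct and follows exactly the same approach as the paper: the authors also invoke $G\simeq\Omega BG$, the loop/suspension adjunction, and the universal property of the wedge, and they make the same remark that the splitting is only one of spaces (not co-$H$-spaces), so the result is only a set bijection. In fact the paper treats the lemma as immediate from the preceding paragraph and gives no further argument.
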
 

Another useful general lemma is the following. 

\begin{Lem} 
   \label{reduction} 
   If $G$ is simply-connected then, rationally, $[G/T,G]\cong 0$. Consequently, 
   $[G/T,G]$ is the product of its $p$-components for all primes $p$.  
\end{Lem}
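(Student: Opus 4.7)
The plan is to first establish the rational triviality and then deduce the $p$-primary product decomposition from standard facts about nilpotent groups.

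For the rational statement, since $G$ is a simply-connected compact Lie group, $H^*(BG;\Q)$ is polynomial on generators in even degrees $2n_i$ with $n_i \geq 2$; dually, by Cartan-Serre there is a rational equivalence $G_{\Q} \simeq \prod_i K(\Q, 2n_i - 1)$. Hence
\[
  [G/T, G_{\Q}] \;\cong\; \prod_i H^{2n_i - 1}(G/T;\Q).
\]
By Theorem \ref{Bruhat}, $H^*(G/T;\Z)$ is torsion-free and concentrated in even degrees, so its rational odd cohomology vanishes and the right-hand side is zero. Thus the rationalization of $[G/T, G]$ is trivial, which means every element of $[G/T, G]$ is torsion.

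To deduce the product decomposition, I would argue in three steps. First, since $G/T$ is a finite CW complex and $G$ has finitely generated homotopy groups, induction up the Postnikov tower of $G$ shows $[G/T, G]$ is a finitely generated group, hence finite by the rational triviality just established. Second, via the adjunction $[G/T, G] \cong [\Sigma G/T, BG]$ with $BG$ simply-connected (hence nilpotent as a space), the group $[\Sigma G/T, BG]$ carries a nilpotent group structure; this is the standard Hilton-Mislin-Roitberg statement that homotopy classes from a co-$H$-space into a nilpotent space form a nilpotent group. Third, a finite nilpotent group decomposes as the direct product of its $p$-Sylow subgroups, giving $[G/T, G] \cong \prod_p [G/T, G]_{(p)}$.

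The main obstacle is justifying the nilpotency of the possibly non-abelian group $[G/T, G]$, since without this one cannot in general decompose a finite group as a product of its $p$-primary pieces; everything else, namely the rational cohomology computation and the finite generation via obstruction theory, is routine.
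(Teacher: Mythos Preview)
Your argument for rational triviality is identical to the paper's: $G$ is rationally a product of odd Eilenberg--MacLane spaces and $G/T$ has no odd cohomology. The paper's proof stops there and does not justify the ``consequently'' clause at all, so your treatment is strictly more complete than what appears in the paper.

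Your justification of the $p$-primary decomposition is correct in outline, with two small remarks. First, the implication ``finitely generated and every element torsion $\Rightarrow$ finite'' already uses nilpotency (it fails for arbitrary groups), so your step~2 logically precedes the conclusion of step~1; the cleanest ordering is to establish nilpotency of $[\Sigma G/T, BG]$ first and only then conclude finiteness from finite generation plus rational triviality. Second, the nilpotency of $[\Sigma A, Y]$ for $A$ a finite complex (equivalently, of $[X,G]$ for $X$ a finite complex and $G$ group-like) is a classical result usually attributed to G.~W.~Whitehead rather than to Hilton--Mislin--Roitberg specifically, though the latter certainly rely on it for their localization theory. Neither point affects the validity of your argument, and your identification of nilpotency as the crux of the ``consequently'' is exactly right.
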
 

\begin{proof} 
In all cases, there is a rational homotopy equivalence 
$G\simeq\prod_{i=1}^{m} K(\mathbb{Q},2n_{i}-1)$ 
for some sequence $\{i_{1},\ldots,i_{m}\}$. Thus 
$[G/T,G]\cong\prod_{i=1}^{m} H^{2n_{i}-1}(G/T;\mathbb{Q})$. 
But $H^{\mbox{odd}}(G/T;\mathbb{Q})\cong 0$, so we obtain a rational isomorphism 
$[G/T,G]\cong 0$. 
\end{proof} 

We now explicitly calculate $[G/T,G]$ when $G$ is one of $SU(3)$, $SU(4)$, 
$Sp(2)$ or $G_{2}$. 

\begin{Prop} 
   \label{su3T} 
   There is a group isomorphism $[SU(3)/T^{2},SU(3)]\cong\mathbb{Z}/6\mathbb{Z}$ 
   and the generator corresponds to the self-map 
   \[\namedddright{SU(3)/T^{2}}{q}{S^{6}}{f}{SU(3)}{}{SU(3)/T}\] 
   where $q$ is the pinch to the top cell and $f$ represents the generator 
   of $\pi_{6}(SU(3))\cong\mathbb{Z}/6\mathbb{Z}$. 
\end{Prop}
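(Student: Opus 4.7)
The plan is to use the adjunction $[SU(3)/T^{2}, SU(3)]\cong[\Sigma SU(3)/T^{2}, BSU(3)]$ and then exploit the $p$-local wedge splittings of $\Sigma SU(3)/T^{2}$ from Example~\ref{SU(3)-weyl} via Lemmas~\ref{reduction} and~\ref{hset}. This reduces the problem to computing $[A_{i}, BSU(3)]_{(p)}$ for each prime $p$ and each wedge summand $A_{i}$, and then reassembling the answers across primes.

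The only input I would need on $BSU(3)$ is the low-dimensional homotopy: $\pi_{3}(BSU(3))=0$, $\pi_{5}(BSU(3))=0$, and $\pi_{7}(BSU(3))\cong\pi_{6}(SU(3))\cong\mathbb{Z}/6\mathbb{Z}$. For $p\geq 3$, Example~\ref{SU(3)-weyl} gives $\Sigma SU(3)/T^{2}\simeq_{p} 2S^{3}\vee 2S^{5}\vee S^{7}$, so the only nontrivial contribution comes from the $S^{7}$ summand and equals $\pi_{7}(BSU(3))_{(p)}$, which is $\mathbb{Z}/3\mathbb{Z}$ at $p=3$ and zero for $p\geq 5$. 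For $p=2$ the splitting is $\Sigma SU(3)/T^{2}\simeq_{2} S^{7}\vee\Sigma\cptwo\vee\Sigma\cptwo$, so I would additionally need $[\Sigma\cptwo, BSU(3)]_{(2)}$. Applying $[-,BSU(3)]$ to the cofiber sequence
\[
S^{4}\stackrel{\Sigma\eta}{\longrightarrow}S^{3}\longrightarrow\Sigma\cptwo\longrightarrow S^{5}\stackrel{\Sigma^{2}\eta}{\longrightarrow}S^{4},
\]
both flanking groups $\pi_{3}(BSU(3))$ and $\pi_{5}(BSU(3))$ vanish, so $[\Sigma\cptwo, BSU(3)]=0$. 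Assembling the primes gives $[SU(3)/T^{2}, SU(3)]\cong\mathbb{Z}/2\mathbb{Z}\times\mathbb{Z}/3\mathbb{Z}\cong\mathbb{Z}/6\mathbb{Z}$.

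To identify the generator, I would argue that in both the $2$-local and $3$-local splittings the $S^{7}$ summand is the one carrying the top Schubert class $\sigma_{121}$: this is the module $V_{1}$ at $p=2$, and the degree-$6$ component of $V_{2}$ (after the unstable Adams operations further split it by degree) at $p=3$. Consequently, the suspended pinch map $\Sigma q\colon\Sigma SU(3)/T^{2}\to S^{7}$ represents the projection onto the $S^{7}$ wedge summand at each relevant prime. Given a generator $f\in\pi_{6}(SU(3))\cong\mathbb{Z}/6\mathbb{Z}$, the adjoint of $f\circ q$ is $\Sigma q$ followed by a generator of $\pi_{7}(BSU(3))$, and so $f\circ q$ generates $[SU(3)/T^{2}, SU(3)]$. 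The step I expect to be the main obstacle is precisely this final identification: verifying that the telescope built from the idempotent whose image contains $\sigma_{121}$ really agrees, up to homotopy equivalence of summands, with the pinch-to-top-cell construction requires unwinding the proof of Lemma~\ref{idemdecomp} and checking that the top cell is carried by that particular wedge summand rather than merely being detected by it in cohomology.
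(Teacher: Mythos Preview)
Your argument is correct and follows essentially the same route as the paper: localize via Lemma~\ref{reduction}, apply the splittings of Example~\ref{SU(3)-weyl} through Lemma~\ref{hset}, and read off the answer from low-dimensional homotopy of $BSU(3)$. The only cosmetic difference is that the paper computes $[\Sigma\cptwo,BSU(3)]$ by passing to $BSU(\infty)$ and using $\widetilde{K}(\Sigma\cptwo)=0$, whereas you use the cofibre sequence for $\Sigma\cptwo$ directly; both are equally short. Your flagged ``main obstacle'' is not one: since there is a single $7$-cell, any $S^{7}$ wedge summand $s\colon S^{7}\hookrightarrow\Sigma SU(3)/T^{2}$ satisfies $\deg(\Sigma q\circ s)=\pm 1$, so $\Sigma q$ has a section and the adjoint of $f\circ q$ is nontrivial at each prime---no unwinding of Lemma~\ref{idemdecomp} is needed, and the paper does not do any either.
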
 

\begin{proof} 
By Lemma~\ref{reduction}, to calculate $[SU(3)/T^{2},SU(3)]$ it suffices to 
localize and work prime by prime.~\medskip 

\noindent 
\textit{Case 1: $p=2$}. 
By Example~\ref{SU(3)-weyl}, there is a $2$-local homotopy equivalence 
\[\Sigma SU(3)/T^{2}\simeq S^{7}\vee\Sigma\cptwo\vee\Sigma\cptwo.\] 
Therefore 
\begin{align*} 
   [SU(3)/T^{2},SU(3)] & \cong [\Sigma SU(3)/T^{2},BSU(3)] \\ 
   & \cong [S^{7},BSU(3)]\times [\Sigma\cptwo,BSU(3)]\times [\Sigma\cptwo,BSU(3)]. 
\end{align*} 
By~\cite{M}, the $2$-component of $\pi_{6}(SU(3))$ is $\mathbb{Z}/2\mathbb{Z}$. 
Since the homotopy fibre of 
\(\namedright{BSU(3)}{}{BSU(\infty)}\) 
is $6$-connected and $\Sigma\cptwo$ is $5$-dimensional we have 
\[[\Sigma\cptwo,BSU(3)]\cong [\Sigma\cptwo,BSU(\infty)]\cong\widetilde{K}(\Sigma\cptwo)\cong 0\] 
where $\widetilde{K}$ is reduced complex $K$-theory. Therefore 
\[[SU(3)/T^{2},SU(3)]\cong\mathbb{Z}/2\mathbb{Z}\] 
and this corresponds to a nontrivial $2$-local self-map of $SU(3)/T^{2}$ given by the composite 
\[\namedddright{SU(3)/T^{2}}{q}{S^{6}}{f}{SU(3)}{}{SU(3)/T^{2}}\] 
where $q$ is the pinch to the top cell and $f$ represents the generator 
of $\pi_{6}(SU(3))\cong\mathbb{Z}/2\mathbb{Z}$. 
\medskip 

\noindent 
\textit{Case 2: $p>2$}. 
By Example~\ref{SU(3)-weyl}, localized at a prime $p>2$ there is a homotopy equivalence 
$\Sigma SU(3)/T^{2}\simeq 2S^{3}\vee 2S^{5}\vee S^{7}$. 
Therefore, by Lemma~\ref{hset}, 
\begin{align*} 
      [SU(3)/T^{2},SU(3)] & \cong 2[S^{3},BSU(3)]\times  
               2[S^{5},BSU(3)]\times [S^{7},BSU(3)] \\ 
         & \cong 2\pi_{2}(SU(3))\times 2\pi_{4}(SU(3))\times\pi_{6}(SU(3)). 
\end{align*} 
By~\cite{MT}, $\pi_{2}(SU(3))\cong 0$, $\pi_{4}(SU(3))\cong 0$ and after inverting $2$, 
$\pi_{6}(SU(3))\cong\mathbb{Z}/3\mathbb{Z}$. Therefore, localized at $p>3$ we have  
$[SU(3)/T^{2},SU(3)]\cong 0$ and localized at $3$ we have 
\[[SU(3)/T^{2},SU(3)]\cong\mathbb{Z}/3\mathbb{Z}.\] 
In the latter case, the generator corresponds to a nontrivial $3$-local self-map of $SU(3)/T^{2}$ 
given by the composite 
\[\namedddright{SU(3)/T^{2}}{q}{S^{6}}{f}{SU(3)}{}{SU(3)/T^{2}}\] 
where $q$ is the pinch map to the top cell and $f$ represents a generator 
of $\pi_{6}(SU(3))\cong\mathbb{Z}/3\mathbb{Z}$. 
\medskip 

Combining both cases, we obtain a set isomorphism 
$[SU(3)/T^{2},SU(3)]\cong\mathbb{Z}/6\mathbb{Z}$. 
To upgrade this to an isomorphism of groups, it suffices to show that the group 
$[SU(3)/T^{2},SU(3)]$ has an element of order~$6$. But observe that the generators of the $2$ 
and $3$-components are obtained from the same map  
\[\namedddright{SU(3)/T^{2}}{q}{S^{6}}{f}{SU(3)}{}{SU(3)/T^{2}}\] 
where $f$ represents a generator of $\pi_{6}(S^{3})\cong\mathbb{Z}/6\mathbb{Z}$.  
Thus $f\circ q$ has order~$6$ in $[SU(3)/T^{2},SU(3)]$ and we are done. 
\end{proof} 

\begin{Prop} 
   \label{su4T} 
   Localized away from $2$ there is a set isomorphism 
   $[SU(4)/T^{3},SU(4)]\cong\mathbb{Z}/15\mathbb{Z}\times 3(\mathbb{Z}/5\mathbb{Z})$. 
   The group $[SU(4)/T^{3},SU(4)]$ has a subgroup of order $15$ corresponding to the self-map 
   \[\namedddright{SU(4)/T^{3}}{q}{S^{12}}{f}{SU(4)}{}{SU(4)/T^{3}}\] 
   where $q$ is the pinch map to the top cell and $f$ represents the $3$ and $5$-components 
   of $\pi_{12}(SU(4))\cong\mathbb{Z}/60\mathbb{Z}$. The group $[SU(4)/T^{3},SU(4)]$ 
   has three subgroups of order $5$ corresponding to $5$-local self-maps 
   \[\namedddright{SU(4)/T^{3}}{q'}{3S^{10}\vee S^{12}}{p_{i}}{S^{10}}{f_{i}}{SU(4)} 
          \longrightarrow SU(4)/T^{3}\] 
   where $q'$ collapses the $9$-skeleton of $SU(4)/T^{3}$ to a point, and for $1\leq i\leq 3$ 
   the map $p_{i}$ pinches to the $i^{th}$-copy of $S^{10}$ while $f_{i}$ represents the 
   generator of $\pi_{10}(SU(4))\cong\mathbb{Z}/5\mathbb{Z}$. 
\end{Prop}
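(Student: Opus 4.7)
The approach mirrors the proof of Proposition~\ref{su3T}. By Lemma~\ref{reduction}, $[SU(4)/T^{3}, SU(4)]$ is the product of its $p$-components. At each prime $p$, I would combine the adjunction $[SU(4)/T^{3}, SU(4)] \cong [\Sigma SU(4)/T^{3}, BSU(4)]$ with the $p$-local suspension splittings from Example~\ref{SU(4)-weyl} and Lemma~\ref{hset}, reducing the computation to a product of mapping sets $[A_{i}, BSU(4)]$, one for each wedge summand $A_{i}$.

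At primes $p > 5$, the splitting is a wedge of odd spheres $S^{3}, \ldots, S^{13}$, so each factor is $\pi_{2k}(SU(4))_{(p)}$ for some $1 \leq k \leq 6$. Using $\pi_{12}(SU(4)) \cong \mathbb{Z}/60\mathbb{Z}$ and that the other relevant $\pi_{2k}(SU(4))$ are torsion supported at primes at most $5$, all these vanish and $[SU(4)/T^{3}, SU(4)]_{(p)} = 0$. At $p = 5$, the splitting $\Sigma SU(4)/T^{3} \simeq_{5} 3S^{3} \vee 5S^{5} \vee 6S^{7} \vee 5S^{9} \vee 3S^{11} \vee S^{13}$ reduces the problem to the $5$-components of these groups; only $\pi_{10}(SU(4))_{(5)} \cong \mathbb{Z}/5\mathbb{Z}$ (appearing three times, from $3S^{11}$) and $\pi_{12}(SU(4))_{(5)} \cong \mathbb{Z}/5\mathbb{Z}$ (appearing once, from $S^{13}$) contribute. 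The three $\mathbb{Z}/5\mathbb{Z}$ factors are realized by the composites described in the statement, obtained by first collapsing the $9$-skeleton, then pinching to the appropriate $S^{10}$, and composing with a generator of $\pi_{10}(SU(4))_{(5)}$.

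At $p = 3$, the splitting involves Moore-type summands $A(d, d+4)$. The sphere summands $S^{5}, S^{9}, S^{13}$ contribute $\pi_{4}(SU(4))_{(3)} = 0$, $\pi_{8}(SU(4))_{(3)} = 0$, and $\pi_{12}(SU(4))_{(3)} = \mathbb{Z}/3\mathbb{Z}$ respectively. For each Moore-type summand $A(d, d+4)$ with $d \in \{3, 5, 7\}$, I would apply the cofibration $S^{d+3} \xrightarrow{\alpha_{1}} S^{d} \to A(d, d+4)$ to obtain the exact sequence
\[
\pi_{d}(SU(4)) \xrightarrow{(\Sigma\alpha_{1})^{\ast}} \pi_{d+3}(SU(4)) \to [A(d, d+4), BSU(4)] \to \pi_{d-1}(SU(4)) \xrightarrow{\alpha_{1}^{\ast}} \pi_{d+2}(SU(4))
\]
and verify from a table of the $3$-local homotopy groups of $SU(4)$ that both flanking terms vanish, forcing $[A(d, d+4), BSU(4)]_{(3)} = 0$. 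This yields $[SU(4)/T^{3}, SU(4)]_{(3)} \cong \mathbb{Z}/3\mathbb{Z}$, generated by the pinch to the top cell followed by a generator of $\pi_{12}(SU(4))_{(3)}$.

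Finally, I would assemble the prime-local pieces. The $p = 3$ generator and one $p = 5$ generator arise from the same underlying map (pinch to the top cell composed with a generator of $\pi_{12}(SU(4)) \cong \mathbb{Z}/60\mathbb{Z}$), so together they produce a $\mathbb{Z}/15\mathbb{Z}$-subgroup, while the remaining three copies of $\mathbb{Z}/5\mathbb{Z}$ come from the $S^{11}$ summands at $p = 5$. This yields the claimed set isomorphism $[SU(4)/T^{3}, SU(4)] \cong \mathbb{Z}/15\mathbb{Z} \times 3(\mathbb{Z}/5\mathbb{Z})$. The main technical obstacle is showing the vanishing of $[A(7,11), BSU(4)]_{(3)}$, which turns on the precise $3$-local values of the unstable homotopy groups $\pi_{9}(SU(4))$ and $\pi_{10}(SU(4))$ and the action of $\alpha_{1}$-multiplication on them; this is the step requiring the most care with the tables.
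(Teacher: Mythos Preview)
Your approach matches the paper's, and the overall structure---prime-by-prime via Lemma~\ref{reduction}, the suspension splittings from Example~\ref{SU(4)-weyl}, Lemma~\ref{hset}, and cofibration exact sequences for the $A(d,d+4)$ summands---is exactly what the paper does. However, your $p=3$ analysis contains an inaccuracy: for $A(5,9)$ the flanking terms do \emph{not} both vanish. One has $\pi_{8}(SU(4))_{(3)}\cong\mathbb{Z}/3\mathbb{Z}$, so the term $\pi_{d+3}(SU(4))$ immediately to the left of $[A(5,9),BSU(4)]$ in your exact sequence is nonzero. The same holds for $A(7,11)$, where $\pi_{10}(SU(4))_{(3)}\cong\mathbb{Z}/3\mathbb{Z}$; note that the groups entering there are $\pi_{7}$ and $\pi_{10}$, not $\pi_{9}$ and $\pi_{10}$.

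The paper treats both $A(5,9)$ and $A(7,11)$ the same way: it uses that $\pi_{5}(SU(4))\cong\pi_{7}(SU(4))\cong\mathbb{Z}$ and that the Mimura--Toda tables show the map $(\alpha_{1})^{\ast}\colon\mathbb{Z}\to\mathbb{Z}/3\mathbb{Z}$ is an epimorphism in each instance, whence $[A(5,9),BSU(4)]_{(3)}=[A(7,11),BSU(4)]_{(3)}=0$. So the fix is simple: put $A(5,9)$ on the same footing as $A(7,11)$ and invoke surjectivity of $\alpha_{1}$-multiplication out of the integral groups $\pi_{5}$ and $\pi_{7}$, rather than claiming the flanking groups are zero.
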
 

\begin{proof} 
By Lemma~\ref{reduction}, to calculate $[SU(4)/T^{3},SU(4)]$ it suffices to 
localize and work prime by prime. 
\medskip 

\noindent 
\textit{Case 1: $p=3$}. By Example~\ref{SU(4)-weyl}, there is a $3$-local homotopy equivalence 
\begin{equation} 
\label{su4at3} 
\Sigma SU(4)/T^{3}\simeq 3A(3,7)\vee 3S^{5}\vee 3A(7,11)\vee 3S^{9}\vee 
      2A(5,9)\vee S^{13}. 
\end{equation} 
We calculate $[\Sigma SU(4)/T^{3},BSU(4)]$ by using Lemma~\ref{hset}. 

By~\cite{MT}, $\pi_{m}(BSU(4))\cong 0$ for 
$m\in\{3,5,7,9\}$, so $[A(3,7),BSU(4)]\cong [S^{5},BSU(4)]\cong [S^{9},BSU(4)]\cong0$, 
and $\pi_{13}(BSU(4))\cong\mathbb{Z}/60\mathbb{Z}$. It remains to consider 
$[A(7,11),BSU(4)]$ and $[A(5,9),BSU(4)]$. 

For $A(7,11)$, the cofibration sequence 
\(\namedddright{S^{7}}{}{A(7,11)}{}{S^{11}}{\alpha_{1}}{S^{8}}\) 
induces an exact sequence 
\[\namedddright{[S^{8},BSU(4)]}{(\alpha_{1})^{\ast}}{[S^{11},BSU(4)]}{}{[A(7,11),BSU(4)]} 
     {}{[S^{7},BSU(4)]}.\] 
On the one hand, $[S^{7},BSU(4)]=\pi_{6}(SU(4))\cong 0$. On the other hand, 
by~\cite{MT}, $[S^{8},BSU(4)]\cong\mathbb{Z}$, $[S^{11},BSU(4)]\cong\mathbb{Z}/3\mathbb{Z}$, 
and $(\alpha_{1})^{\ast}$ is an epimorphism. Thus $[A(7,11),BSU(4)]\cong 0$. 

For $A(5,9)$, the cofibration sequence        
\(\namedddright{S^{5}}{}{A(5,9)}{}{S^{9}}{\alpha_{1}}{S^{6}}\) 
induces an exact sequence 
\[\namedddright{[S^{6},BSU(4)]}{(\alpha_{1})^{\ast}}{[S^{9},BSU(4)]}{}{[A(5,9),BSU(4)]} 
     {}{[S^{5},BSU(4)]}.\]  
On the one hand, $[S^{5},BSU(4)]=\pi_{4}(SU(4))\cong 0$. On the other hand, 
by~\cite{MT}, $[S^{6},BSU(4)]\cong\mathbb{Z}$, $[S^{9},BSU(4)]\cong\mathbb{Z}/3\mathbb{Z}$, 
and $(\alpha_{1})^{\ast}$ is an epimorphism. Thus $[A(5,9),BSU(4)]\cong 0$. 

Therefore, from~(\ref{su4at3}) we obtain a set isomorphism 
\[[SU(4)/T^{3},SU(4)]\cong\mathbb{Z}/3\mathbb{Z}.\] 
This is in fact a group isomorphism. It suffices to find an element of 
$[SU(4)/T^{3},SU(4)]$ whose order when localized at $3$ is $3$. But this is 
given by the map $q\circ f$ in the composite

\[\namedddright{SU(4)/T^{3}}{q}{S^{12}}{f}{SU(4)}{}{SU(4)/T^{3}}\] 
where $q$ is the pinch map to the top cell and $f$ represents the $3$-component 
of $\pi_{12}(SU(4))\cong\mathbb{Z}/60\mathbb{Z}$. The map $f\circ q$ 
has order $3$ in $[SU(4)/T^{3},SU(4)]$ so 
\medskip 

\noindent 
\textit{Case 2: $p\geq 5$}. 
By Example~\ref{SU(4)-weyl}, localized at a prime $p\geq 5$ there is a homotopy equivalence 
\[\Sigma SU(4)/T^{3}\simeq 3S^{3}\vee 5S^{5}\vee 6S^{7}\vee 5S^{9}\vee 3S^{11}\vee S^{13}.\]  
Therefore, by Lemma~\ref{hset}, 
\begin{align*} 
      [SU(4)/T^{3},SU(4)] & \cong 
          3[S^{3},BSU(4)]\times 5[S^{5},BSU(4)]\times 6[S^{7},BSU(4)]\times \\ 
          &  \hspace{1cm} 5[S^{9},BSU(4)]\times 3[S^{11},BSU(4)]\times [S^{13},BSU(4)].  
\end{align*} 
By~\cite{MT}, $\pi_{2}(SU(4))\cong 0$, $\pi_{4}(SU(4))\cong 0$, $\pi_{6}(SU(4))\cong 0$, 
and after inverting $2$ and $3$, $\pi_{8}(SU(4))\cong 0$, $\pi_{10}(SU(4))\cong\mathbb{Z}/5\mathbb{Z}$ 
and $\pi_{12}(SU(4))\cong\mathbb{Z}/5\mathbb{Z}$. Thus 
\[[SU(4)/T^{3},SU(4)]\cong 4(\mathbb{Z}/5\mathbb{Z})\] 
and the generators correspond to two types of nontrivial $5$-local self-maps. First,  
\[\namedddright{SU(4)/T^{3}}{q'}{3S^{10}\vee S^{12}}{p_{i}}{S^{10}}{f_{i}}{SU(4)} 
         \longrightarrow SU(4)/T^{3}\] 
where $q'$ is the map that collapses out the $9$-skeleton of $SU(4)/T^{3}$, and for $1\leq i\leq 3$ 
the map $p_{i}$ pinches to the $i^{th}$-copy of $S^{10}$ while $f_{i}$ represents the 
generator of $\pi_{10}(SU(4))\cong\mathbb{Z}/5\mathbb{Z}$. Second, 
\[\namedddright{SU(4)/T^{3}}{q}{S^{12}}{f}{SU(4)}{}{SU(4)/T^{3}}\] 
where $q$ is the pinch map to the top cell and $f$ represents the generator 
of $\pi_{12}(SU(4))\cong\mathbb{Z}/5\mathbb{Z}$. 
\medskip 

Finally, notice that the same map 
\(\nameddright{SU(4)/T^{3}}{q}{S^{12}}{f}{SU(4)}\) 
appears in the $p=3$ and $p=5$ cases, so $f\circ q$ has order~$15$ and 
generates a subgroup of order~$15$ in $[SU(4)/T^{3},SU(4)]$. 
\end{proof} 
   
\begin{Prop} 
   \label{g2T} 
   There is a group isomorphism $[G_{2}/T,G_{2}]\cong 0$. 
\end{Prop}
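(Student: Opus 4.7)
The plan is to follow the pattern of Propositions \ref{su3T} and \ref{su4T}. By Lemma \ref{reduction}, $[G_{2}/T, G_{2}]$ is the product of its $p$-components, so it suffices to work prime by prime. At each prime $p$ we combine the adjunction $[G_{2}/T, G_{2}] \cong [\Sigma G_{2}/T, BG_{2}]$ with the $p$-local splittings of $\Sigma G_{2}/T$ from Section \ref{G2section} and Lemma \ref{hset}, reducing the problem to computing $[A, BG_{2}]_{(p)}$ for each wedge summand $A$. The inputs will be the Mimura--Toda computations of the low-dimensional homotopy groups $\pi_{\ast}(G_{2})$.

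For $p > 5$ the splitting is a wedge of spheres in odd dimensions between $3$ and $13$, so the computation reduces to a product of $p$-local values of $\pi_{n}(G_{2})$ for even $n \leq 12$; these all vanish at $p \geq 5$, so $[G_{2}/T, G_{2}]_{(p)} = 0$. For $p = 5$ there is an additional $2A(3,11)$ factor, handled by a cofibration-sequence argument modelled on the $A(5,9)$ step of Proposition \ref{su4T}. For $p = 3$ the splitting is $A(5,9) \vee \bigvee_{i=2}^{4} \Tel(c_{i})$; the $A(5,9)$ factor is treated by the same cofibration argument, and for each $\Tel(c_{i})$ the explicit cohomological description of $V_{i}$ in Section \ref{G2section}, combined with the non-trivial $3$-primary $\mathcal{P}^{1}$-action identified there, will show $[\Tel(c_{i}), BG_{2}]_{(3)} = 0$.

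The main obstacle is $p = 2$, where the telescopes $\Tel(c_{1}), \Tel(c_{2}), \Tel(c_{3})$ are not individually identified as known spaces. The strategy is to exploit that $\pi_{n}(G_{2})_{(2)}$ vanishes in most of the relevant range, so most cohomological degrees of the telescopes cannot support a non-trivial map into $BG_{2}$. For $\Tel(c_{1})$, whose cohomology sits in degrees $7$ and $13$, the relevant $2$-primary homotopy groups of $G_{2}$ vanish, giving $[\Tel(c_{1}), BG_{2}]_{(2)} = 0$ immediately. For $\Tel(c_{2})$ and $\Tel(c_{3})$, whose cohomology lies in degrees $3, 5, 9, 11$, the only potential obstruction is $\pi_{8}(G_{2})_{(2)} \cong \mathbb{Z}/2\mathbb{Z}$. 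To rule this out, one uses the $\Sq^{2}$-linkage between the degree $9$ and degree $11$ classes within each $V_{i}$, together with the behaviour of the generator of $\pi_{8}(G_{2})_{(2)}$ under multiplication by $\eta$, to show via a cofibre-sequence argument that no map from $\Tel(c_{i})$ to $BG_{2}$ can realise a non-trivial $\mathbb{Z}/2\mathbb{Z}$. This last step is the most delicate part of the argument, since it requires detailed knowledge of the $\eta$-action on the $2$-primary homotopy of $G_{2}$ that is not furnished directly by the splitting.
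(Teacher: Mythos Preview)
Your proposal is correct and follows essentially the same approach as the paper's proof: prime-by-prime via Lemma~\ref{reduction}, using the splittings of Section~\ref{G2section} together with Lemma~\ref{hset}, and then cofibration/Puppe arguments fed by Mimura's tables for $\pi_{\ast}(G_{2})$. In particular, your $p=2$ analysis---handling $\Tel(c_{1})$ by vanishing of $\pi_{7}(BG_{2})$ and $\pi_{13}(BG_{2})$, and killing the $\pi_{9}(BG_{2})\cong\mathbb{Z}/2\mathbb{Z}$ obstruction for $\Tel(c_{2}),\Tel(c_{3})$ via the $Sq^{2}$-link between the $9$- and $11$-cells together with the nontriviality of $\eta$ on the generator---is exactly what the paper does.

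Two small points where the paper's argument is sharper than your sketch. At $p=5$, no cofibration argument is needed for $A(3,11)$: both $\pi_{3}(BG_{2})$ and $\pi_{11}(BG_{2})$ vanish $5$-locally, so $[A(3,11),BG_{2}]=0$ immediately. At $p=3$, the three telescopes are not handled uniformly: $\Tel(c_{3})$ (cells in $5,9,13$) dies by vanishing homotopy alone; $\Tel(c_{2})$ (cells in $3,7,11$ with $\mathcal{P}^{1}$ linking $7$ and $11$) uses an $\alpha_{1}^{\ast}$ isomorphism $[S^{7},BG_{2}]\to[S^{10},BG_{2}]$; while $\Tel(c_{4})$ (cells in $3,7,11$ with $\mathcal{P}^{1}$ linking $3$ and $7$ but \emph{not} $7$ and $11$) needs a different exact-sequence step, using that $\alpha_{1}^{\ast}\colon[S^{4},BG_{2}]\cong\mathbb{Z}\to[S^{7},BG_{2}]\cong\mathbb{Z}/3\mathbb{Z}$ is surjective. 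Your blanket phrase ``combined with the non-trivial $\mathcal{P}^{1}$-action'' covers the right ingredients, but be aware that the role of $\mathcal{P}^{1}$ differs between $\Tel(c_{2})$ and $\Tel(c_{4})$.
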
 

\begin{proof} 
By Lemma~\ref{reduction}, to calculate $[G_{2}/T,G_{2}]$ it suffices to 
localize and work prime by prime. 
\medskip  

\noindent
\textit{Case 1: $p=2$}. As in Section~\ref{G2section}, there is a $2$-local homotopy 
equivalence 
\[\Sigma G_{2}/T\simeq\Tel(c_{1})\vee\Tel(c_{2})\vee\Tel(c_{3}).\] 
By Lemma~\ref{hset}, to calculate the $2$-component of $[G_{2}/T,G_{2}]$ 
it is equivalent to calculate $[\Tel(c_{i}),BG_{2}]$ for $1\leq i\leq 3$. The space 
$\Tel(c_{1})$ has cells in dimensions $7$ and $13$. By~\cite{M}, 
$\pi_{7}(BG_{2})\cong\pi_{13}(BG_{2})\cong 0$, so $[\Tel(c_{1}),BG_{2}]\cong 0$. 
The spaces $\Tel(c_{2})$ and $\Tel(c_{3})$ both have cells in 
dimensions $3,5,9,11$, and the Steenrod operation $Sq^{2}$ connects the $3$ and $5$ cells, 
and the $9$ and $11$ cells. Therefore, for $2\leq i\leq 3$ there is a homotopy cofibration 
\(\nameddright{\Sigma\cptwo}{}{\Tel(c_{i})}{}{\Sigma^{7}\cptwo}\).  
Let 
\(g\colon\namedright{\Tel(c_{i})}{}{BG_{2}}\) 
be any map. By~\cite{M}, $\pi_{3}(BG_{2})\cong\pi_{5}(BG_{2})\cong 0$, so 
the restriction of $g$ to $\Sigma\cptwo$ is null homotopic, implying that $g$ 
factors as a composite 
\(\nameddright{\Tel(c_{i})}{}{\Sigma^{7}\cptwo}{h}{BG_{2}}\) 
for some map $h$. By~\cite{M}, $\pi_{9}(BG_{2})\cong\mathbb{Z}/2\mathbb{Z}$. 
We claim that the restriction of $h$ to $S^{9}$ is trivial. If not, then 
it represents the generator $\gamma$ of $\pi_{9}(BG_{2})$. By~\cite{M}, this generator 
has the property that the composite 
\(\nameddright{S^{10}}{\eta}{S^{9}}{\gamma}{BG_{2}}\) 
is also nontrivial. But this implies that there can be no extension of $\gamma$ 
to a map 
\(\namedright{\Sigma^{7}\cptwo}{}{BG_{2}}\). 
That is, the restriction of $h$ to~$S^{9}$ cannot extend to $h$, a contradiction. 
Therefore the restriction of $h$ to~$S^{9}$ is trivial, implying that $h$ factors 
as a composite 
\(\nameddright{\Sigma^{7}\cptwo}{}{S^{11}}{k}{BG_{2}}\) 
for some map $k$. By~\cite{M}, $\pi_{11}(BG_{2})\cong 0$. Hence $k$, and 
therefore $h$, and therefore $g$ are all trivial. Consequently, $[\Tel(c_{i}),BG_{2}]\cong 0$ 
for $2\leq i\leq 3$. Collectively, we obtain a $2$-local isomorphism $[G_{2}/T,G]\cong 0$.  
\medskip 

\noindent 
\textit{Case 2: $p=3$}. 
As in Section~\ref{G2section}, there is a $3$-local homotopy equivalence 
\[\Sigma G_{2}/T\simeq A(5,9)\vee\Tel(c_{2})\vee\Tel(c_{3})\vee\Tel(c_{4}).\] 
By Lemma~\ref{hset}, to calculate the $3$-component of $[G_{2}/T,G_{2}]$ it is equivalent 
to calculate $[A(5,9),BG_{2}]$ and $[\Tel(c_{i}),BG_{2}]$ for $2\leq i\leq 4$.
By~\cite{M}, at $3$ we have $\pi_{m}(BG_{2})=0$ for $m\in\{3,5,9,11,13\}$ 
while $\pi_{7}(BG_{2})\cong\mathbb{Z}/{3}\mathbb{Z}$. In particular, 
$[A(5,9),BG_{2}]\cong 0$ since $A(5,9)$ has cells in dimensions $5$ and $9$, 
and $[\Tel(c_{3}),BG_{2}]\cong 0$ as $\Tel(c_{3})$ has cells in 
dimensions $5$, $9$ and $13$. 

The space $\Tel(c_{2})$ has cells in dimensions $3$, $7$ and $11$ with the $7$ 
and $11$ cells connected by the Steenrod operation $\mathcal{P}^{1}$. The 
triviality of $\pi_{3}(BG_{2})$ implies that any map 
\(\namedright{\Tel(c_{3})}{}{BG_{2}}\) 
factors through $\Tel(c_{3})/S^{3}$. The nontrivial Steenrod operation in cohomology 
implies that there is a homotopy cofibration 
\[\namedddright{S^{10}}{\alpha}{S^{7}}{}{\Tel(c_{3})/S^{3}}{}{S^{11}}.\] 
This induces an exact sequence 
\[\namedddright{[S^{11},BG_{2}]}{}{[\Tel(c_{3})/S^{3},BG_{2}]}{}{[S^{7},BG_{2}]}{\alpha^{\ast}} 
       {[S^{10},BG_{2}]}.\] 
On the one hand, $[S^{11},BG_{2}]=\pi_{11}(BG_{2})\cong 0$. On the other hand, 
by~\cite{M}, $[S^{7},BG_{2}]\cong\mathbb{Z}/3\mathbb{Z}$, 
$[S^{10},BG_{2}]\cong\mathbb{Z}/3\mathbb{Z}$, and $\alpha^{\ast}$ is an isomorphism. 
Thus $[\Tel(c_{3})/S^{3},BG_{2}]\cong 0$ and hence $[\Tel(c_{3}),BG_{2}]\cong 0$.  

The space $\Tel(c_{4})$ also has cells in dimensions $3$, $7$ and $11$, but this 
time there is no Steenrod operation connecting the $7$ and $11$ cells. Thus 
$\Tel(c_{4})/S^{3}\simeq S^{7}\vee S^{11}$. As in the $\Tel(c_{2})$ case, any map 
\(\namedright{\Tel(c_{4})}{}{BG_{2}}\) 
factors through $\Tel(c_{4})/S^{3}\simeq S^{7}\vee S^{11}$. The homotopy cofibration 
\[\namedddright{S^{3}}{}{\Tel(c_{4})}{}{S^{7}\vee S^{11}}{\gamma}{S^{4}}\] 
induces an exact sequence 
\begin{equation} 
  \label{X4exact} 
  \namedddright{[S^{4},BG_{2}]}{\gamma^{\ast}}{[\Tel(c_{4}),BG_{2}]}{} 
       {[S^{7}\vee S^{11},BG_{2}]}{}{[S^{3},BG_{2}]}. 
\end{equation}  
Since the $3$ and $7$-cells of $\Tel(c_{4})$ are connected by the Steenrod operation 
$\mathcal{P}^{1}$, the restriction of $\gamma$ to $S^{7}$ is $\alpha$. It is not 
clear what the restriction of $\gamma$ to $S^{11}$ is but this is not relevant since 
$\pi_{11}(BG_{2})=0$, so $\gamma^{\ast}$ factors as 
\(\nameddright{[S^{4},BG_{2}]}{\alpha^{\ast}}{[S^{7},BG_{2}]}{}{[S^{7}\vee S^{11},BG_{2}]}\). 
By~\cite{M}, $[S^{4},BG_{2}]\cong\mathbb{Z}$, $[S^{7},BG_{2}]\cong\mathbb{Z}/3\mathbb{Z}$, 
and $\alpha^{\ast}$ is reduction mod-$3$. Therefore $\gamma^{\ast}$ is onto. On the 
other hand, $[S^{3},BG^{2}]\cong 0$, so exactness in~(\ref{X4exact}) implies that 
$[\Tel(c_{4}),BG_{2}]\cong 0$. 
Collectively, we obtain a $3$-local isomorphism $[G_{2}/T,G_{2}]\cong 0$. 
\medskip 

\noindent 
\textit{Case 3: $p=5$}. 
As in Section~\ref{G2section}, there is a $5$-local homotopy equivalence 
\[\Sigma G_{2}/T\simeq 2 S^{5}\vee 2 S^{7}\vee 2 S^{9}\vee S^{13}\vee 2 A(3,11).\] 
Therefore by Lemma~\ref{hset} 
\[[G_{2}/T,G_{2}]\cong 2\pi_{4}(G_{2})\times 2\pi_{6}(G_{2})\times 2\pi_{8}(G_{2}) 
      \times\pi_{12}(G_{2})\times 2[A(3,11),BG_{2}].\] 
By~\cite{M}, $\pi_{4}(G_{2})\cong 0$ and $\pi_{12}(G_{2})\cong 0$, and localized 
at $5$, $\pi_{6}(G_{2})\cong 0$, $\pi_{8}(G_{2})\cong 0$. As well, 
$\pi_{3}(BG_{2})\cong 0$ and $\pi_{11}(BG_{2})\cong 0$ so $[A,BG_{2}]\cong 0$. 
Thus, at $5$, $[G_{2}/T,G_{2}]\cong 0$. 
\medskip 

\noindent 
\textit{Case 4: $p> 5$}. 
As in Section~\ref{G2section}, there is a $p$-local homotopy equivalence 
\[\Sigma G_{2}/T\simeq 2S^{3}\vee 2 S^{5}\vee 2 S^{7}\vee 2 S^{9}\vee 2S^{11}\vee S^{13}.\] 
By~\cite{M}, the $p$-component of $\pi_{m}(G_{2})$ is $0$ for 
$m\in\{2,4,6,8,10,12\}$. Thus, at $p>5$, $[G/T,G]\cong 0$. 
\end{proof}

\begin{Prop} 
   \label{Sp2T} 
   Localized away from $2$ there is a group isomorphism $[Sp(2)/T,Sp(2)]\cong 0$. 
\end{Prop}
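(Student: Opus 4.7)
The plan is to follow the template of Propositions \ref{su3T}, \ref{su4T}, and \ref{g2T}. By Lemma \ref{reduction} it suffices to verify that each odd $p$-component of $[Sp(2)/T,Sp(2)]$ is trivial, and by the adjunction $[Sp(2)/T,Sp(2)]\cong[\Sigma Sp(2)/T,BSp(2)]$ together with Lemma \ref{hset}, this reduces to computing $[A_i,BSp(2)]$ for the wedge summands $A_i$ of the $p$-local decompositions of $\Sigma Sp(2)/T$ described in the Type $B_2=C_2$ subsection. A useful general fact I will use throughout is that $Sp(2)$ splits $p$-locally as $S^3\times S^7$ at every odd prime, so the relevant homotopy groups of $Sp(2)$ are read off from those of $S^3$ and $S^7$.

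For $p\geq 5$ the decomposition is $\Sigma Sp(2)/T\simeq_p 2S^3\vee 2S^5\vee 2S^7\vee 2S^9$, so the answer is a product of $\pi_k(Sp(2))$ for $k\in\{2,4,6,8\}$ localized at $p$. The first nonzero $p$-primary homotopy classes of $S^3$ and $S^7$ occur in dimensions $2p$ and $2p+4$ respectively, both strictly above $8$ when $p\geq 5$, so every factor vanishes.

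For $p=3$ the decomposition is $\Sigma Sp(2)/T\simeq_3 2A(3,7)\vee 2S^5\vee S^9$. The sphere summands contribute $\pi_4(Sp(2))$ and $\pi_8(Sp(2))$, both zero at $3$ by the $S^3\times S^7$ splitting. For the remaining factor I will apply $[-,BSp(2)]$ to the cofibration $\namedddright{S^6}{\alpha_1}{S^3}{}{A(3,7)}{}{S^7}$ to obtain the exact sequence
\[
\nameddright{[S^4,BSp(2)]}{\alpha_1^\ast}{[S^7,BSp(2)]}{}{[A(3,7),BSp(2)]}\longrightarrow[S^3,BSp(2)],
\]
in which the right-hand term is $\pi_2(Sp(2))=0$ and the two left-hand groups are $\Z_{(3)}$ and $\Z/3\Z$. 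The key point is that $\alpha_1^\ast$ is surjective: under $Sp(2)\simeq_{(3)}S^3\times S^7$ the generator of $\pi_3(Sp(2))$ is the inclusion $Sp(1)\hookrightarrow Sp(2)$, and composing with $\alpha_1\in\pi_6(S^3)$ produces the generator of $\pi_6(Sp(2))\cong\Z/3\Z$. Once this is established, $[A(3,7),BSp(2)]=0$ and the $3$-component vanishes.

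The main obstacle is precisely this last surjectivity: exactness and the group orders alone do not force it, so one needs to invoke the 3-local product splitting of $Sp(2)$ together with the identification of $\alpha_1$ as the 3-primary generator of $\pi_6(S^3)$ in order to pin down the image of $\alpha_1^\ast$.
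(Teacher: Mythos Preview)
Your overall strategy---using the wedge decompositions of $\Sigma Sp(2)/T$ and computing each $[A_i,BSp(2)]$---is sound and parallels the earlier propositions, but there is a factual error in the $p=3$ case: $Sp(2)$ does \emph{not} split $3$-locally as $S^3\times S^7$. In the fibre sequence $S^3\to Sp(2)\to S^7$ the connecting map $\partial\colon\pi_7(S^7)\to\pi_6(S^3)\cong\Z/12\Z$ hits a generator, so at $p=3$ it is $\alpha_1$ and the bundle is nontrivial. The upshot is that $\pi_6(Sp(2))=0$ integrally (this is exactly what \cite{MT} records), and your exact sequence for $[A(3,7),BSp(2)]$ collapses immediately because $[S^7,BSp(2)]=\pi_6(Sp(2))=0$; there is no surjectivity of $\alpha_1^\ast$ to establish, and the argument you gave for it rests on the false splitting. (There is also a minor slip for $p\geq 5$: the top cell contributes a single $S^9$, not two, though this does not affect the conclusion.)

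Once these are corrected your argument goes through, but the paper takes a much shorter route that bypasses the suspension splittings entirely. Since $Sp(2)/T$ has cells only in dimensions $2,4,6,8$ and, by \cite{MT}, $\pi_m(Sp(2))=0$ for each $m\in\{2,4,6,8\}$ after inverting $2$, a direct cell-by-cell obstruction argument shows every map $Sp(2)/T\to Sp(2)$ is null homotopic. Your approach has the virtue of uniformity with the harder cases treated earlier, but it imports machinery that is unnecessary here; the paper simply exploits the fact that all the relevant homotopy groups already vanish.
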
 

\begin{proof} 
The cells of $Sp(2)/T$ occur in dimensions $2,4,6,8$, and by~\cite{MT}, 
after inverting $2$ we have $\pi_{m}(Sp(2))\cong 0$ for $m\in\{2,4,6,8\}$. 
Thus $[Sp(2)/T,Sp(2)]\cong 0$. 
\end{proof}

 
\end{document}